\newtheorem{theorem}{Theorem}[section]
\newtheorem{lemma}{Lemma}[section]
\newtheorem{corollary}{Corollary}[section]
\newtheorem{remark}{Remark}[section]
\newcommand{\comm}[1]{{\color{red}#1}}
\newcommand{\revise}[1]{{\color{blue}#1}}
\begin{document}
\title{Enhanced Error Estimates for Augmented Subspace Method\footnote{This work was supported in part
by the National Key Research and Development Program of China (2019YFA0709601), Beijing Natural Science Foundation (Z200003),
National Natural Science Foundations of China (NSFC 11771434), the National Center for Mathematics and Interdisciplinary Science, CAS.}}
\author{Haikun Dang\footnote{LSEC,
Academy of Mathematics and Systems Science,
Chinese Academy of
Sciences, No.55, Zhongguancun Donglu, Beijing 100190, China, and School of
Mathematical Sciences, University of Chinese Academy
of Sciences, Beijing, 100049
(danghaikun@lsec.cc.ac.cn)},\ \ \ 
Yifan Wang\footnote{LSEC,
Academy of Mathematics and Systems Science,
Chinese Academy of
Sciences, No.55, Zhongguancun Donglu, Beijing 100190, China, and School of
Mathematical Sciences, University of Chinese Academy
of Sciences, Beijing, 100049 (wangyifan@lsec.cc.ac.cn)},\ \ \ Hehu Xie\footnote{LSEC,
Academy of Mathematics and Systems Science,
Chinese Academy of
Sciences, No.55, Zhongguancun Donglu, Beijing 100190, China, and School of
Mathematical Sciences, University of Chinese Academy
of Sciences, Beijing, 100049 (hhxie@lsec.cc.ac.cn)} \  \  and\  \ Chenguang Zhou\footnote{LSEC, ICMSEC,
Academy of Mathematics and Systems Science, Chinese Academy of
Sciences,  Beijing 100190, China (zhouchenguang@lsec.cc.ac.cn)} }
\date{}
\maketitle
\begin{abstract}
In this paper, some enhanced error estimates are derived for the augmented subspace methods
which are designed for solving eigenvalue problems.
We will show that the augmented subspace methods have
the second order convergence rate which is better than the existing results.
These sharper estimates provide a new dependence of
convergence rate on the coarse spaces in augmented subspace methods.
These new results are also validated by some numerical examples.

\vskip0.3cm {\bf Keywords.}  eigenvalue problem, augmented subspace method, enhanced error estimate,
finite element method.

\vskip0.2cm {\bf AMS subject classifications.} 65N30, 65N25, 65L15, 65B99.
\end{abstract}

\section{Introduction}
One of the fundamental problems in modern science and engineering society is to solve
large-scale eigenvalue problems. This is always a very difficult task to solve high-dimensional
eigenvalue problems which come from practical physical and chemical sciences.
Compared with linear boundary value problems, there are no so many efficient numerical
methods for solving eigenvalue problems with optimal complexity.
Solving large-scale eigenvalue problems poses significant challenges for scientific computing.
In order to solve large sparse eigenvalue problems, there have developed eigensolvers such as
Krylov subspace type methods
(Implicitly Restarted Lanczos/Arnoldi Method (IRLM/IRAM) \cite{Sorensen}),
the Preconditioned INVerse ITeration (PINVIT) method \cite{BramblePasciakKnyazev,PINVIT,Knyazev},
the Locally Optimal Block Preconditioned Conjugate Gradient (LOBPCG) method \cite{Knyazev_Lobpcg,KnyazevNeymeyr},
and the Jacobi-Davidson-type techniques \cite{Bai}.
All these popular methods include the orthogonalization steps during computing Rayleigh-Ritz problems
which are always the bottlenecks for designing efficient parallel schemes
for determining relatively many eigenpairs.
Recently, a type of multilevel correction method is proposed for solving
eigenvalue problems in \cite{full,HongXieXu,LinXie_MultiLevel,Xie_IMA,Xie_JCP,XieZhangOwhadi,XuXieZhang}.
In this multilevel correction scheme, there exists an augmented subspace which is constructed with the help of
the low dimensional finite element space defined on the coarse grid.
Based on this special augmented subspace, we have designed some efficient numerical
methods for solving eigenvalue problems and nonlinear equations. This type of augmented subspace methods
only need a low dimension finite element space on the coarse mesh and the final
finite element space on the finest mesh. This method can also work even the coarse and finest mesh has no
nested property which is an extension of the multilevel correction method.
The application of this augmented subspace can transform
the solution of the eigenvalue problem on the final level of mesh can be reduced to the
solution of boundary value problems on the final level of mesh and the solution
of the eigenvalue problem on the low dimensional augmented subspace.
The multilevel correction method and augmented subspace method give the
ways to construct the multigrid method for eigenvalue problems.
More important, we can design an eigenpair-wise parallel eigenslver
for the eigenvalue problems based on the augmented subspace.
This type of parallel method avoids doing orthogonalization and inner-products in the high
dimensional space which account for a large portion of the wall time in the parallel computation.
For more information, please refer to \cite{XuXieZhang}.

The aim of this paper is to give new and sharper error estimates for the augmented
subspace method. These new error estimates provide new investigations between
the augmented subspace method and the two-grid method \cite{XuZhou}. Roughly speaking, we will give
the following error estimate for the augmented subspace method
\begin{eqnarray*}
\|\bar u_h-u_h^{(\ell+1)}\|_a \lesssim \eta_a^2(V_H)\|\bar u_h-u_h^{(\ell)}\|_a,
\end{eqnarray*}
which is sharper  than the existed results included in
\cite{LinXie_MultiLevel,Xie_IMA,Xie_JCP,XieZhangOwhadi,XuXieZhang}.
This estimate also shows the dependence of the convergence rate for
the augmented subspace method on the low dimensional space $V_H$.

An outline of the paper goes as follows. In Section 2, we introduce the
finite element method for the eigenvalue problem and the corresponding
error estimates. The augmented subspace method and some enhanced error estimates
will be given in Section 3 which is the main part of this paper.
In section 4, two choices of the coarse spaces are discussed and some numerical examples
are provided to validate the enhanced results in this paper.
Some concluding remarks are given in the last section.

\section{Discretization by finite element method}
In this section, we introduce some notation and error estimates of
the finite element approximation for eigenvalue problems. In this
paper, the letter $C$ (with or without subscripts) denotes a generic
positive constant which may be different at different occurrences.
For convenience, the symbols $\lesssim$, $\gtrsim$ and $\approx$
will be used in this paper. That $x_1\lesssim y_1, x_2\gtrsim y_2$
and $x_3\approx y_3$, mean that $x_1\leq C_1y_1$, $x_2 \geq c_2y_2$
and $c_3x_3\leq y_3\leq C_3x_3$ for some constants $C_1, c_2, c_3$
and $C_3$ that are independent of mesh sizes.

For generality, let $V$ and $W$ denote two Hilbert spaces and $V\subset W$. Then let
$a(\cdot,\cdot)$ and $b(\cdot,\cdot)$ be two positive definite symmetric bilinear forms
on $V\times V$ and $W\times W$, respectively.
Furthermore,  based on the bilinear form $a(\cdot,\cdot)$, we can define the
norm on the space $V$ as follows
\begin{eqnarray}\label{Nomr_a}
\|v\|_a = \sqrt{a(v,v)},\ \ \ \ \forall v\in V.
\end{eqnarray}
Similarly, we can define the norm $\|\cdot\|_b$ by the bilinear form $b(\cdot,\cdot)$
on the space $W$
\begin{eqnarray}\label{Nomr_b}
\|w\|_b = \sqrt{b(w,w)},\ \ \ \ \forall w\in W.
\end{eqnarray}
In this paper, we assume that the norm $\|\cdot\|_a$ is
relatively compact with respect to the norm $\|\cdot\|_b$ \cite{Conway}.

In our methodology description, we are concerned with the following
general eigenvalue problem:
Find $(\lambda, u )\in \mathcal{R}\times V$ such that $a(u,u)=1$ and
\begin{eqnarray}\label{weak_eigenvalue_problem}
a(u,v)=\lambda b(u,v),\quad \forall v\in V.
\end{eqnarray}
It is well known that the eigenvalue problem (\ref{weak_eigenvalue_problem})
has an eigenvalue sequence $\{\lambda_j \}$ (cf. \cite{BabuskaOsborn_1989,Chatelin}):
$$0<\lambda_1\leq \lambda_2\leq\cdots\leq\lambda_k\leq\cdots,\ \ \
\lim_{k\rightarrow\infty}\lambda_k=\infty,$$ and associated
eigenfunctions
$$u_1, u_2, \cdots, u_k, \cdots,$$
where $a(u_i,u_j)=\delta_{ij}$ ($\delta_{ij}$ denotes the Kronecker function).
In the sequence $\{\lambda_j\}$, the $\lambda_j$ are repeated according to their
geometric multiplicity.

Now, let us define the finite dimensional subspace approximations of the problem
(\ref{weak_eigenvalue_problem}).
For generality, let $V_h$ denote some type of finite dimensional subspace of the Hilbert space $V$.
It is well known that the finite element method is the widest used way to build the subspace $V_h$.
For easy understanding and as an example, we use the finite element method to build the space $V_h$.
 First we generate a shape-regular triangulation $\mathcal{T}_h$ of the computing domain $\Omega\subset \mathcal{R}^d\
(d=2,3)$ into triangles or rectangles for $d=2$ (tetrahedrons or
hexahedrons for $d=3$). The diameter of a cell $K\in\mathcal{T}_h$
is denoted by $h_K$ and the mesh size $h$ describes the maximal diameter of all cells
$K\in\mathcal{T}_h$.
Based on the mesh $\mathcal{T}_h$, we can construct a finite element space denoted by
 $V_h \subset V$. For simplicity, we set $V_h$ as the Lagrange type  finite
 element space which is defined as follows
\begin{equation}\label{linear_fe_space}
  V_h = \Big\{ v_h \in C(\Omega)\ \big|\ v_h|_{K} \in \mathcal{P}_k,
  \ \ \forall K \in \mathcal{T}_h\Big\}\cap H^1_0(\Omega),
\end{equation}
where $\mathcal{P}_k$ denotes the polynomial space of degree at most $k$.

Then, we can define the standard finite element scheme for eigenvalue
problem (\ref{weak_eigenvalue_problem}):
Find $(\bar{\lambda}_h, \bar{u}_h)\in \mathcal{R}\times V_h$
such that $a(\bar{u}_h,\bar{u}_h)=1$ and
\begin{eqnarray}\label{Weak_Eigenvalue_Discrete}
a(\bar{u}_h,v_h)=\bar{\lambda}_h b(\bar{u}_h,v_h),\quad\ \  \ \forall v_h\in V_h.
\end{eqnarray}
It is well known that $V_h\subset V$ is a family of finite-dimensional spaces that satisfy
the following assumption: For any $w \in V$
\begin{eqnarray}\label{Approximation_Property}
\lim_{h\rightarrow0}\inf_{v_h\in V_h}\|w-v_h\|_a = 0.
\end{eqnarray}
From \cite{BabuskaOsborn_1989,BabuskaOsborn_Book}, the  discrete eigenvalue
problem (\ref{Weak_Eigenvalue_Discrete}) has eigenvalues:
$$0<\bar{\lambda}_{1,h}\leq \bar{\lambda}_{2,h}\leq\cdots\leq \bar{\lambda}_{k,h}
\leq\cdots\leq \bar{\lambda}_{N_h,h},$$
and corresponding eigenfunctions
\begin{eqnarray}\label{Discrete_Eigenfunctions}
\bar{u}_{1,h}, \bar{u}_{2,h}, \cdots, \bar{u}_{k,h}, \cdots, \bar{u}_{N_h,h},
\end{eqnarray}
where $a(\bar{u}_{i,h},\bar{u}_{j,h})=\delta_{ij}$, $1\leq i,j\leq N_h$ ($N_h$ is
the dimension of the finite element space $V_h$).
From the min-max principle \cite{BabuskaOsborn_1989,BabuskaOsborn_Book}, the eigenvalues
of (\ref{Weak_Eigenvalue_Discrete}) provide upper bounds for the first
$N_h$ eigenvalues of (\ref{weak_eigenvalue_problem})
\begin{eqnarray}\label{Upper_Bound_Result}
\lambda_i \leq \bar\lambda_{i,h},\ \ \ \ 1\leq i\leq N_h.
\end{eqnarray}

For the following analysis in this paper, we define $\mu_i=1/\lambda_i$ for $i=1,2,\cdots$, and
$\bar\mu_i = 1/\bar\lambda_{i,h}$ for $i=1, \cdots, N_h$.
In order to measure the error of the finite element space to the desired function, we define the following notation
\begin{eqnarray}\label{Delta_V_h}
\delta(w,V_h) = \inf_{v_h\in V_h}\|w-v_h\|_a,\ \ \ {\rm for}\ w\in V.
\end{eqnarray}
In this paper, we also need the following quantity for error analysis:
\begin{eqnarray}
\eta_a(V_h)&=&\sup_{\substack{ f\in W\\ \|f\|_b=1}}\inf_{v_h\in V_h}\|Tf-v_h\|_{a},\label{eta_a_h_Def}
\end{eqnarray}
where $T:W\rightarrow V$ is defined as
\begin{equation}\label{laplace_source_operator}
a(Tf,v) = b(f,v), \ \ \ \ \  \forall v\in V\ \ {\rm for}\  f \in W.
\end{equation}

In order to understand the method more clearly, we state the error estimate for the
eigenpair approximation by the finite element method. For this aim,
we define the finite element projection $\mathcal P_h: V\rightarrow V_h$ as follows
\begin{eqnarray}\label{Energy_Projection}
a(\mathcal P_h w, v_h) = a(w,v_h),\ \ \ \ \forall v_h\in V_h \ \ {\rm for}\  w\in V.
\end{eqnarray}
It is obvious that the finite element projection operator $\mathcal P_h$ has following error estimates.
\begin{lemma}
For any function $w\in V$, the finite element projection operator $\mathcal P_h$ has following error estimates
\begin{eqnarray}
\|w-\mathcal P_h w\|_a &= &\inf_{w_h\in V_h}\|w-w_h\|_a = \delta(w,V_h),\label{Delta_V_h_P_h}\\
\|w-\mathcal P_h w\|_b &\leq& \eta_a(V_h)\|w-\mathcal P_hw\|_a.\label{Aubin_Nitsche_Estimate}
\end{eqnarray}
\end{lemma}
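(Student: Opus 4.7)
The plan is to handle the two estimates separately. For the first identity, I note that since $a(\cdot,\cdot)$ is symmetric positive definite on $V\times V$, it is an inner product, and $\mathcal{P}_h$ is by construction the $a$-orthogonal projection onto the closed subspace $V_h$. I would start by observing Galerkin orthogonality $a(w-\mathcal{P}_hw, v_h)=0$ for all $v_h\in V_h$, which follows directly from the definition (\ref{Energy_Projection}). Then, for any $w_h\in V_h$, I would write $w-w_h = (w-\mathcal{P}_hw) + (\mathcal{P}_hw - w_h)$ and apply the Pythagorean identity in the $a$-inner product to get $\|w-w_h\|_a^2 = \|w-\mathcal{P}_hw\|_a^2 + \|\mathcal{P}_hw - w_h\|_a^2$. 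Taking the infimum over $w_h\in V_h$ yields the desired equality, which by definition (\ref{Delta_V_h}) equals $\delta(w,V_h)$.

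For the second estimate, I would carry out a standard Aubin--Nitsche duality argument. Denote $e = w - \mathcal{P}_hw \in V \subset W$. I would introduce the auxiliary function $\phi = Te \in V$, defined by (\ref{laplace_source_operator}) with $f=e$, so that $a(\phi,v) = b(e,v)$ for every $v\in V$. Choosing $v=e$ gives
\begin{equation*}
\|e\|_b^2 = b(e,e) = a(\phi, e).
\end{equation*}
Using Galerkin orthogonality once more, for any $v_h\in V_h$,
\begin{equation*}
\|e\|_b^2 = a(\phi - v_h, e) \leq \|\phi - v_h\|_a \,\|e\|_a,
\end{equation*}
so passing to the infimum over $v_h\in V_h$ yields $\|e\|_b^2 \leq \bigl(\inf_{v_h\in V_h}\|Te - v_h\|_a\bigr)\|e\|_a$.

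To close the argument, I would invoke the definition (\ref{eta_a_h_Def}) of $\eta_a(V_h)$, which by homogeneity implies $\inf_{v_h\in V_h}\|Tf - v_h\|_a \leq \eta_a(V_h)\|f\|_b$ for every $f\in W$. Applying this with $f = e$ gives $\|e\|_b^2 \leq \eta_a(V_h)\|e\|_b\|e\|_a$, and dividing through by $\|e\|_b$ (trivial if it vanishes) produces (\ref{Aubin_Nitsche_Estimate}).

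I do not anticipate a serious obstacle here: both statements are textbook consequences of symmetry of $a(\cdot,\cdot)$ and Galerkin orthogonality. The only subtle point is recognizing that the duality trick requires plugging $e$ itself into $T$, which is legitimate because $V\subset W$ ensures $e\in W$ so that $Te$ is well defined and $\eta_a(V_h)$ can be applied with $f=e$.
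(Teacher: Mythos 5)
Your proof is correct and takes essentially the same route as the paper: the paper states the lemma without proof, but the identical Aubin--Nitsche duality computation appears in (\ref{L2_Energy_Estiate}) inside the proof of Theorem \ref{Error_Estimate_Theorem_k}, where the $b$-norm is dualized as $\sup_{\|g\|_b=1}b(e,g)=\sup_{\|g\|_b=1}a(e,(I-\mathcal P_h)Tg)\leq\eta_a(V_h)\|e\|_a$, and your choice $g=e/\|e\|_b$ is just that argument specialized. The first identity is the standard best-approximation (Pythagorean) property of the $a$-orthogonal projection, exactly as you argue.
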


Before stating error estimates of the subspace projection method, we introduce a lemma which
comes from \cite{StrangFix}. For completeness, a proof is provided here.
\begin{lemma}(\cite[Lemma 6.4]{StrangFix})\label{Strang_Lemma}
For any eigenpair $(\lambda,u)$ of (\ref{weak_eigenvalue_problem}), the following equality holds
\begin{eqnarray*}\label{Strang_Equality}
(\bar\lambda_{j,h}-\lambda)b(\mathcal P_hu,\bar u_{j,h})
=\lambda b(u-\mathcal P_hu,\bar u_{j,h}),\ \ \ j = 1, \cdots, N_h.
\end{eqnarray*}
\end{lemma}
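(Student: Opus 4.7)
The plan is to derive the identity by playing the two eigenvalue equations (continuous and discrete) against each other, using the Galerkin orthogonality built into the finite element projection $\mathcal P_h$.

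First, I would take the discrete eigenvalue equation for $\bar u_{j,h}$ and test it against $\mathcal P_h u \in V_h$, which gives
\begin{equation*}
a(\bar u_{j,h},\mathcal P_h u) = \bar\lambda_{j,h}\, b(\bar u_{j,h},\mathcal P_h u).
\end{equation*}
By the symmetry of $a(\cdot,\cdot)$ and the defining property \eqref{Energy_Projection} of $\mathcal P_h$ applied with the test function $\bar u_{j,h}\in V_h$, the left-hand side rewrites as
\begin{equation*}
a(\bar u_{j,h},\mathcal P_h u) = a(\mathcal P_h u,\bar u_{j,h}) = a(u,\bar u_{j,h}).
\end{equation*}

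Next, I would feed in the continuous eigenvalue equation \eqref{weak_eigenvalue_problem} for $(\lambda,u)$ tested with $v=\bar u_{j,h}\in V_h\subset V$, obtaining $a(u,\bar u_{j,h}) = \lambda\, b(u,\bar u_{j,h})$. Combining the two displays yields
\begin{equation*}
\lambda\, b(u,\bar u_{j,h}) = \bar\lambda_{j,h}\, b(\mathcal P_h u,\bar u_{j,h}).
\end{equation*}

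Finally, I would split $b(u,\bar u_{j,h}) = b(\mathcal P_h u,\bar u_{j,h}) + b(u-\mathcal P_h u,\bar u_{j,h})$ and move the $\lambda\, b(\mathcal P_h u,\bar u_{j,h})$ term to the right side to obtain the claimed equality. There is no real obstacle here: the only thing to be careful about is that $\mathcal P_h u$ and $\bar u_{j,h}$ are both in $V_h$, so that both the Galerkin identity for $\mathcal P_h$ and the discrete eigenvalue equation can be applied with legitimate test functions; everything else is bilinearity and symmetry.
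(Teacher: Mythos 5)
Your proof is correct and follows essentially the same route as the paper: both rest on the chain $\bar\lambda_{j,h}\,b(\mathcal P_h u,\bar u_{j,h}) = a(\mathcal P_h u,\bar u_{j,h}) = a(u,\bar u_{j,h}) = \lambda\, b(u,\bar u_{j,h})$, obtained from the discrete eigenvalue equation, the Galerkin property of $\mathcal P_h$, and the continuous eigenvalue equation. The only cosmetic difference is that the paper cancels the common term $-\lambda\, b(\mathcal P_h u,\bar u_{j,h})$ at the outset, whereas you split $b(u,\bar u_{j,h})$ and rearrange at the end.
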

\begin{proof}
Since $-\lambda b(\mathcal P_h u,\bar u_{j,h})$ appears on both sides, we only need to prove that
\begin{eqnarray*}
\bar\lambda_{j,h}b(\mathcal P_h u,\bar u_{j,h})=\lambda b(u,\bar u_{j,h}).
\end{eqnarray*}
From (\ref{weak_eigenvalue_problem}), (\ref{Weak_Eigenvalue_Discrete}) and (\ref{Energy_Projection}),
the following equalities hold
\begin{eqnarray*}
\bar\lambda_{j,h}b(\mathcal P_h u,\bar u_{j,h}) = a(\mathcal P_h u,\bar u_{j,h})
=a(u,\bar u_{j,h}) = \lambda b(u,\bar u_{j,h}).
\end{eqnarray*}
Then the proof is complete.
\end{proof}
The following lemma has already been presented in \cite{XieZhangOwhadi} which gives the error estimates for the one eigenpair
approximation. This lemma will be used for analyzing the error estimates for the augmented subspace method for only one eigenpair.
For the proof, please refer to \cite{XieZhangOwhadi}.
\begin{lemma}(\cite[Lemma 3.3]{XieZhangOwhadi})\label{Error_Estimate_Theorem_Old}
Let  $(\lambda,u)$ denote an exact eigenpair of the eigenvalue problem (\ref{weak_eigenvalue_problem}).
Assume the eigenpair approximation $(\bar\lambda_{i,h},\bar u_{i,h})$ has the property that
$\bar\mu_{i,h}=1/\bar\lambda_{i,h}$ is closest to $\mu=1/\lambda$.
The corresponding spectral projector $E_{i,h}: V\mapsto {\rm span}\{\bar u_{i,h}\}$
is  defined as follows
\begin{eqnarray*}
a(E_{i,h}w,\bar u_{i,h}) = a(w,\bar u_{i,h}),\ \ \ \ {\rm for}\  w\in V.
\end{eqnarray*}
Then the following error estimate holds
\begin{eqnarray}\label{Energy_Error_Estimate_Old}
\|u-E_{i,h}u\|_a&\leq& \sqrt{1+\frac{\bar\mu_{1,h}}{\delta_{\lambda,h}^2}\eta_a^2(V_h)}\|(I-\mathcal P_h)u\|_a,
\end{eqnarray}
where $\eta_a(V_h)$ is defined in (\ref{eta_a_h_Def}) and $\delta_{\lambda,h}$ is defined as follows
\begin{eqnarray}\label{Definition_Delta}
\delta_{\lambda,h} &:=& \min_{j\neq i}|\bar\mu_{j,h}-\mu|=\min_{j\neq i} \Big|\frac{1}{\bar\lambda_{j,h}}-\frac{1}{\lambda}\Big|.
\end{eqnarray}
Furthermore, the eigenvector approximation $\bar u_{i,h}$ has following
error estimate in $\|\cdot\|_b$-norm
\begin{eqnarray}\label{L2_Error_Estimate_Old}
\|u-E_{i,h}u\|_b &\leq&\Big(1+\frac{\bar\mu_{1,h}}{\delta_{\lambda,h}}\Big)\eta_a(V_h)\|u-E_{i,h}u\|_a.
\end{eqnarray}
\end{lemma}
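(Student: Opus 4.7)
The plan is to split the error as $u-E_{i,h}u=(u-\mathcal P_hu)+(\mathcal P_hu-E_{i,h}u)$, where $\mathcal P_hu-E_{i,h}u\in V_h$ is $a$-orthogonal to $u-\mathcal P_hu$. Since $a(\bar u_{i,h},\bar u_{i,h})=1$, the spectral projector can be written as $E_{i,h}u=a(u,\bar u_{i,h})\bar u_{i,h}=a(\mathcal P_hu,\bar u_{i,h})\bar u_{i,h}$, so expanding $\mathcal P_hu$ in the $a$-orthonormal basis $\{\bar u_{j,h}\}_{j=1}^{N_h}$ of $V_h$ gives
\begin{eqnarray*}
\mathcal P_hu-E_{i,h}u=\sum_{j\neq i}a(u,\bar u_{j,h})\,\bar u_{j,h},\qquad \|\mathcal P_hu-E_{i,h}u\|_a^2=\sum_{j\neq i}a(u,\bar u_{j,h})^2.
\end{eqnarray*}
The Pythagoras identity $\|u-E_{i,h}u\|_a^2=\|u-\mathcal P_hu\|_a^2+\|\mathcal P_hu-E_{i,h}u\|_a^2$ then reduces the first estimate (\ref{Energy_Error_Estimate_Old}) to controlling the tail sum.

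Next I would turn the coefficients $a(u,\bar u_{j,h})$ into $b$-quantities using Lemma \ref{Strang_Lemma}. Since $a(\mathcal P_hu,\bar u_{j,h})=\bar\lambda_{j,h}b(\mathcal P_hu,\bar u_{j,h})=a(u,\bar u_{j,h})$, substituting into Strang's identity yields
\begin{eqnarray*}
a(u,\bar u_{j,h})=\frac{1}{\mu-\bar\mu_{j,h}}\,b(u-\mathcal P_hu,\bar u_{j,h}),
\end{eqnarray*}
so that using the separation (\ref{Definition_Delta}) gives
\begin{eqnarray*}
\|\mathcal P_hu-E_{i,h}u\|_a^2\le \frac{1}{\delta_{\lambda,h}^2}\sum_{j\neq i}b(u-\mathcal P_hu,\bar u_{j,h})^2.
\end{eqnarray*}
To finish I would apply Bessel's inequality with the $b$-orthonormal system $\{\sqrt{\bar\lambda_{j,h}}\bar u_{j,h}\}\subset W$, namely $\sum_j\bar\lambda_{j,h}b(u-\mathcal P_hu,\bar u_{j,h})^2\le \|u-\mathcal P_hu\|_b^2$, bound $\bar\lambda_{j,h}\ge \bar\lambda_{1,h}$ to pull out $\bar\mu_{1,h}$, and invoke the Aubin–Nitsche estimate (\ref{Aubin_Nitsche_Estimate}) to replace $\|u-\mathcal P_hu\|_b$ by $\eta_a(V_h)\|u-\mathcal P_hu\|_a$. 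Combined with the Pythagoras identity this yields (\ref{Energy_Error_Estimate_Old}).

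For the $\|\cdot\|_b$-estimate (\ref{L2_Error_Estimate_Old}), I would bound the two components of $u-E_{i,h}u$ separately. The piece $\|u-\mathcal P_hu\|_b$ is handled directly by (\ref{Aubin_Nitsche_Estimate}), contributing the ``$1\cdot\eta_a(V_h)$'' term. For $\mathcal P_hu-E_{i,h}u\in V_h$, the relation $b(\bar u_{j,h},\bar u_{k,h})=\bar\mu_{j,h}\delta_{jk}$ gives
\begin{eqnarray*}
\|\mathcal P_hu-E_{i,h}u\|_b^2=\sum_{j\neq i}\bar\mu_{j,h}\,a(u,\bar u_{j,h})^2
=\sum_{j\neq i}\frac{\bar\mu_{j,h}}{(\mu-\bar\mu_{j,h})^2}\,b(u-\mathcal P_hu,\bar u_{j,h})^2,
\end{eqnarray*}
and repeating the Bessel/Aubin–Nitsche step with an extra $\bar\mu_{1,h}$ factor produces the bound $\frac{\bar\mu_{1,h}}{\delta_{\lambda,h}}\eta_a(V_h)\|u-\mathcal P_hu\|_a$. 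Adding the two contributions and using the trivial inequality $\|u-\mathcal P_hu\|_a\le\|u-E_{i,h}u\|_a$ (best approximation property of $\mathcal P_h$) delivers (\ref{L2_Error_Estimate_Old}).

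The main technical hurdle is step two: recognising that Strang's identity is exactly what converts the $a$-coefficients into $b$-pairings of the \emph{finite element error} $u-\mathcal P_hu$, which in turn allows the Aubin–Nitsche duality argument to produce the extra factor of $\eta_a(V_h)$. Once that substitution is in place, the remaining work is a clean Bessel inequality in the discrete eigenbasis, taking care that the factor $\bar\mu_{1,h}$ arises from the uniform lower bound $\bar\lambda_{j,h}\ge\bar\lambda_{1,h}$.
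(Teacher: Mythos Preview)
Your proposal is correct and matches the paper's approach. The paper does not actually prove this lemma in-text (it refers the reader to \cite{XieZhangOwhadi}), but its proof of the multi-eigenpair analogue, Theorem~\ref{Error_Estimate_Theorem_k}, follows exactly the strategy you describe: the $a$-orthogonal splitting through $\mathcal P_h$, Strang's identity (Lemma~\ref{Strang_Lemma}) to rewrite the coefficients as $\frac{1}{\mu-\bar\mu_{j,h}}\,b(u-\mathcal P_hu,\bar u_{j,h})$, the Bessel-type bound in the $b$-orthonormal discrete eigenbasis, Aubin--Nitsche duality, and, for the $\|\cdot\|_b$-estimate, the triangle inequality combined with $\|(I-\mathcal P_h)u\|_a\le\|u-E_{i,h}u\|_a$.
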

For simplicity of notation, we assume that the eigenvalue gap $\delta_{\lambda,h}$
has a uniform lower bound which is denoted by $\delta_\lambda$ (which can be seen as the
``true" separation of the eigenvalue $\lambda$ from others) in the following parts of this paper.
This assumption is reasonable when the mesh size is small enough. We refer to
\cite[Theorem 4.6]{Saad1} and Lemma \ref{Error_Estimate_Theorem_Old} in this paper for details of the
dependence of error estimates on the eigenvalue gap.
Then we have the following simple version of the error estimates
based on Lemma \ref{Error_Estimate_Theorem_Old}.
\begin{corollary}\label{Error_Estimate_Corollary}
Under the conditions of Lemma \ref{Error_Estimate_Theorem_Old}, the following error estimates hold
\begin{eqnarray}
\|u-E_{i,h}u\|_a&\leq& \sqrt{1+\frac{1}{\lambda_1\delta_\lambda^2}\eta_a^2(V_h)}\|(I-\mathcal P_h)u\|_a,\label{Energy_Error_Estimate}\\
\|u-E_{i,h}u\|_b &\leq&\Big(1+\frac{1}{\lambda_1\delta_\lambda}\Big)\eta_a(V_h)\|u-E_{i,h}u\|_a.\label{L2_Error_Estimate}
\end{eqnarray}
\end{corollary}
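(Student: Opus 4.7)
The plan is to obtain Corollary \ref{Error_Estimate_Corollary} as a direct simplification of Lemma \ref{Error_Estimate_Theorem_Old} by replacing the mesh-dependent quantities $\bar\mu_{1,h}$ and $\delta_{\lambda,h}$ appearing on the right-hand sides of (\ref{Energy_Error_Estimate_Old}) and (\ref{L2_Error_Estimate_Old}) with mesh-independent bounds. No new analytic machinery is needed; the argument is a monotonicity-of-bounds calculation.

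First, I would handle the factor $\bar\mu_{1,h} = 1/\bar\lambda_{1,h}$. The min-max bound (\ref{Upper_Bound_Result}) applied with $i=1$ yields $\lambda_1 \leq \bar\lambda_{1,h}$, hence
\[
\bar\mu_{1,h} \;=\; \frac{1}{\bar\lambda_{1,h}} \;\leq\; \frac{1}{\lambda_1}.
\]
Next, I would invoke the standing assumption stated just before the corollary, namely that $\delta_{\lambda,h} \geq \delta_\lambda$ uniformly in $h$, which gives
\[
\frac{1}{\delta_{\lambda,h}} \;\leq\; \frac{1}{\delta_\lambda}, \qquad \frac{1}{\delta_{\lambda,h}^{\,2}} \;\leq\; \frac{1}{\delta_\lambda^{\,2}}.
\]

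With these two pointwise bounds in hand, substituting into the right-hand side of (\ref{Energy_Error_Estimate_Old}) gives the $\|\cdot\|_a$-estimate (\ref{Energy_Error_Estimate}), since each of the quantities $\bar\mu_{1,h}$, $1/\delta_{\lambda,h}^2$, and $\eta_a^2(V_h)$ is nonnegative and appears under a square root of a sum whose other terms are nonnegative, so the bound is monotone in them. Similarly, inserting the same inequalities into (\ref{L2_Error_Estimate_Old}) yields the $\|\cdot\|_b$-estimate (\ref{L2_Error_Estimate}).

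There is really no hard step here: the only subtlety is to notice that both occurrences $\bar\mu_{1,h}/\delta_{\lambda,h}^{\,2}$ and $\bar\mu_{1,h}/\delta_{\lambda,h}$ must be bounded termwise and that the resulting expressions are monotone in each argument, so the direction of the inequality is preserved. The corollary then follows immediately, and it is presented in this simpler mesh-independent form merely to avoid carrying $\bar\mu_{1,h}$ and $\delta_{\lambda,h}$ through the subsequent augmented-subspace analysis in Section 3.
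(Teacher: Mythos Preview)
Your proposal is correct and matches the paper's own reasoning: the corollary is stated without a separate proof, immediately after the remark that $\delta_{\lambda,h}$ is assumed to have the uniform lower bound $\delta_\lambda$, and is described simply as ``the following simple version of the error estimates based on Lemma \ref{Error_Estimate_Theorem_Old}.'' Your use of (\ref{Upper_Bound_Result}) to bound $\bar\mu_{1,h}\le 1/\lambda_1$ and the standing assumption $\delta_{\lambda,h}\ge\delta_\lambda$ is exactly what the paper intends.
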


In the following part of this section, we consider the error estimates for the first
$k$ eigenpair approximations associated with $\bar\lambda_{1,h}\leq \cdots\leq \bar\lambda_{k,h}$.
\begin{theorem}\label{Error_Estimate_Theorem_k}
Let us define the spectral projection $E_{k,h}: V\rightarrow {\rm span}\{\bar u_{1,h}, \cdots, \bar u_{k,h}\}$
as follows
\begin{eqnarray}
a(E_{k,h}w, \bar u_{i,h}) = a(w, \bar u_{i,h}), \ \ \ i=1, \cdots, k\ \ {\rm for}\ w\in V.
\end{eqnarray}
Then the associated exact eigenfunctions $u_1, \cdots, u_k$ of eigenvalue problem (\ref{weak_eigenvalue_problem}) have the following error estimates
\begin{eqnarray}\label{Energy_Error_Estimate_k}
\|u_i - E_{k,h} u_i\|_a \leq \sqrt{1+\frac{1}{\lambda_{k+1}\delta_{k,i,h}^2}\eta_a^2(V_h)}\|(I-\mathcal P_h)u_i\|_a,
\ \ \ \  1\leq i\leq k,
\end{eqnarray}
where 
\begin{eqnarray}\label{Definition_Delta_k_i_0}
\delta_{k,i,h} = \min_{k<j\leq N_h}\left|\frac{1}{\bar\lambda_{j,h}}-\frac{1}{\lambda_i}\right|.
\end{eqnarray}
Furthermore, these $k$ exact eigenvectors have following error estimate in $\|\cdot\|_b$-norm
\begin{eqnarray}\label{L2_Error_Estimate_k_0}
\|u_i-E_{k,h}u_i\|_b \leq \Big(1+\frac{\mu_{k+1}}{\delta_{k,i,h}}\Big)\eta_a(V_h)\|u_i-E_{k,h}u_i\|_a,
\ \ \ 1\leq i\leq k.
\end{eqnarray}
\end{theorem}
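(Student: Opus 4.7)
The plan is to mimic the proof structure of Lemma 2.2 but in a block form, decomposing $\mathcal P_h u_i$ in the $a$-orthonormal eigenbasis $\{\bar u_{j,h}\}_{j=1}^{N_h}$ of $V_h$ and isolating the contribution of the tail modes $j>k$ that are discarded by the spectral projector $E_{k,h}$.

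First, I would set $\alpha_{ij} := a(\mathcal P_h u_i,\bar u_{j,h})$ and write
\[
\mathcal P_h u_i = \sum_{j=1}^{N_h}\alpha_{ij}\,\bar u_{j,h},
\qquad
\|\mathcal P_h u_i\|_a^2=\sum_{j=1}^{N_h}\alpha_{ij}^2.
\]
Since $\bar u_{j,h}\in V_h$, the definition \eqref{Energy_Projection} of $\mathcal P_h$ gives $a(u_i,\bar u_{j,h})=a(\mathcal P_h u_i,\bar u_{j,h})=\alpha_{ij}$, so by the definition of $E_{k,h}$,
\[
E_{k,h}u_i=\sum_{j=1}^{k}\alpha_{ij}\,\bar u_{j,h},
\qquad
\mathcal P_h u_i-E_{k,h}u_i=\sum_{j=k+1}^{N_h}\alpha_{ij}\,\bar u_{j,h}.
\]
Because $u_i-\mathcal P_h u_i$ is $a$-orthogonal to $V_h$, I will use the Pythagorean identity
$\|u_i-E_{k,h}u_i\|_a^2=\|u_i-\mathcal P_h u_i\|_a^2+\|\mathcal P_h u_i-E_{k,h}u_i\|_a^2$ and focus on the tail $\sum_{j>k}\alpha_{ij}^2$.

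The key step is to use Lemma \ref{Strang_Lemma} for each $j>k$. Since $\bar u_{j,h}$ is a discrete eigenfunction, $\alpha_{ij}=a(\mathcal P_h u_i,\bar u_{j,h})=\bar\lambda_{j,h}b(\mathcal P_h u_i,\bar u_{j,h})$, so Lemma \ref{Strang_Lemma} (with $(\lambda,u)=(\lambda_i,u_i)$) yields
\[
\alpha_{ij}=\frac{\bar\lambda_{j,h}\lambda_i}{\bar\lambda_{j,h}-\lambda_i}\,b(u_i-\mathcal P_h u_i,\bar u_{j,h})
=\frac{b(u_i-\mathcal P_h u_i,\bar u_{j,h})}{\mu_i-\bar\mu_{j,h}}.
\]
For $j>k$ the denominator is bounded below in absolute value by $\delta_{k,i,h}$, and the system $\{\sqrt{\bar\lambda_{j,h}}\,\bar u_{j,h}\}$ is $b$-orthonormal in $V_h$; Bessel's inequality then gives $\sum_{j=1}^{N_h}b(w,\bar u_{j,h})^2/\bar\mu_{j,h}\le\|w\|_b^2$ for every $w\in W$. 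Applying this with $w=u_i-\mathcal P_h u_i$, together with $\bar\mu_{j,h}\le\bar\mu_{k+1,h}\le\mu_{k+1}=1/\lambda_{k+1}$ (using \eqref{Upper_Bound_Result} on the tail indices), produces
\[
\sum_{j=k+1}^{N_h}\alpha_{ij}^2
\le\frac{\bar\mu_{k+1,h}}{\delta_{k,i,h}^2}\sum_{j=k+1}^{N_h}\frac{b(u_i-\mathcal P_h u_i,\bar u_{j,h})^2}{\bar\mu_{j,h}}
\le\frac{1}{\lambda_{k+1}\,\delta_{k,i,h}^2}\,\|u_i-\mathcal P_h u_i\|_b^2.
\]
Applying Aubin--Nitsche \eqref{Aubin_Nitsche_Estimate} to replace the $\|\cdot\|_b$-norm by $\eta_a(V_h)\|u_i-\mathcal P_h u_i\|_a$, and inserting into the Pythagorean identity, delivers \eqref{Energy_Error_Estimate_k}.

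For \eqref{L2_Error_Estimate_k_0}, I would split via the triangle inequality $\|u_i-E_{k,h}u_i\|_b\le\|u_i-\mathcal P_h u_i\|_b+\|\mathcal P_h u_i-E_{k,h}u_i\|_b$. The first term is controlled by $\eta_a(V_h)\|u_i-\mathcal P_h u_i\|_a\le\eta_a(V_h)\|u_i-E_{k,h}u_i\|_a$ via \eqref{Aubin_Nitsche_Estimate} and the optimality of $\mathcal P_h$. For the second, the $b$-orthogonality of the $\bar u_{j,h}$ gives $\|\mathcal P_h u_i-E_{k,h}u_i\|_b^2=\sum_{j>k}\alpha_{ij}^2\bar\mu_{j,h}$; repeating the bound above but keeping an extra factor $\bar\mu_{j,h}\le\mu_{k+1}$ produces $\|\mathcal P_h u_i-E_{k,h}u_i\|_b\le(\mu_{k+1}/\delta_{k,i,h})\eta_a(V_h)\|u_i-E_{k,h}u_i\|_a$, and summing yields the claimed factor $(1+\mu_{k+1}/\delta_{k,i,h})$. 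The main bookkeeping obstacle is keeping the two roles of the weight $\bar\mu_{j,h}$ straight: one copy is consumed to invoke the $b$-Bessel inequality on $u_i-\mathcal P_h u_i$, while the remaining factor is absorbed into $\mu_{k+1}$ (once for the $a$-estimate, squared for the $b$-estimate). Once that split is made, the block case reduces to exactly the scalar calculation underlying Lemma \ref{Error_Estimate_Theorem_Old}, with $\delta_{\lambda,h}$ replaced by the block gap $\delta_{k,i,h}$ and $\bar\mu_{1,h}$ by $\bar\mu_{k+1,h}\le\mu_{k+1}$.
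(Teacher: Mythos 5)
Your proposal is correct and follows essentially the same route as the paper's own proof: expand $(I-E_{k,h})\mathcal P_h u_i$ in the discrete eigenbasis, use Lemma \ref{Strang_Lemma} to rewrite each coefficient as $b(u_i-\mathcal P_h u_i,\bar u_{j,h})/(\mu_i-\bar\mu_{j,h})$, bound the tail via the gap $\delta_{k,i,h}$, the weight $\bar\mu_{j,h}\le\bar\mu_{k+1,h}\le\mu_{k+1}$ and the $b$-orthonormal (Bessel) inequality, then finish with the Pythagorean identity and the Aubin--Nitsche bound, keeping an extra factor of $\bar\mu_{j,h}$ for the $\|\cdot\|_b$-estimate. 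The bookkeeping of the two roles of $\bar\mu_{j,h}$ that you flag is handled exactly as you describe in the paper's argument.
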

\begin{proof}
Similarly to the duality argument in the finite element method, the following inequality holds
\begin{eqnarray}\label{L2_Energy_Estiate}
&&\|(I-\mathcal P_h)u_i\|_b=\sup_{\|g\|_b=1}b((I-\mathcal P_h)u_i,g)
=\sup_{\|g\|_b=1}a((I-\mathcal P_h)u_i,Tg) \nonumber\\
&&=\sup_{\|g\|_b=1}a((I-\mathcal P_h)u_i,(I-\mathcal P_h)Tg)
\leq \eta_a(V_h)\|(I-\mathcal P_h)u_i\|_a.
\end{eqnarray}

Since $(I-E_{k,h})\mathcal P_hu_i\in V_h$ and
$(I-E_{k,h})\mathcal P_hu_i\in {\rm span}\{\bar u_{k+1,h},\cdots, \bar u_{N_h,h}\}$,
the following orthogonal expansion holds
\begin{eqnarray}\label{Orthogonal_Decomposition_k}
(I-E_{k,h})\mathcal P_hu_i=\sum_{j=k+1}^{N_h}\alpha_j\bar u_{j,h},
\end{eqnarray}
where $\alpha_j=a(\mathcal P_hu_i,\bar u_{j,h})$. From Lemma \ref{Strang_Lemma}, we have
\begin{eqnarray}\label{Alpha_Estimate}
\alpha_j&=&a(\mathcal P_hu_i,\bar u_{j,h}) = \bar\lambda_{j,h} b\big(\mathcal P_hu_i,\bar u_{j,h}\big)
=\frac{\bar\lambda_{j,h}\lambda}{\bar\lambda_{j,h}-\lambda}b\big(u_i-\mathcal P_hu_i,\bar u_{j,h}\big)\nonumber\\
&=&\frac{1}{\mu-\bar\mu_{j,h}} b\big(u_i-\mathcal P_hu_i,\bar u_{j,h}\big).
\end{eqnarray}
From the orthogonal property of eigenvectors $\bar u_{1,h},\cdots, \bar u_{m,h}$, the following equalities hold
\begin{eqnarray*}
1 = a(\bar u_{j,h},\bar u_{j,h}) = \bar\lambda_{j,h} b(\bar u_{j,h},\bar u_{j,h})= \bar\lambda_{j,h}\|\bar u_{j,h}\|_b^2,
\end{eqnarray*}
which leads to the following property
\begin{eqnarray}\label{Equality_u_j}
\|\bar u_{j,h}\|_b^2=\frac{1}{\bar\lambda_{j,h}}=\bar\mu_{j,h}.
\end{eqnarray}
From (\ref{Energy_Projection}) and definitions of eigenvectors $\bar u_{1,h},\cdots, \bar u_{N_h,h}$,
we have following equalities
\begin{eqnarray}\label{Orthonormal_Basis}
a(\bar u_{j,h},\bar u_{k,h})=\delta_{jk},
\ \ \ \ \ b\Big(\frac{\bar u_{j,h}}{\|\bar u_{j,h}\|_b},
\frac{\bar u_{k,h}}{\|\bar u_{k,h}\|_b}\Big)=\delta_{jk},\ \ \ 1\leq j,k\leq N_h.
\end{eqnarray}
Then from (\ref{Orthogonal_Decomposition_k}), (\ref{Alpha_Estimate}), (\ref{Equality_u_j}) and (\ref{Orthonormal_Basis}),
we have following estimates
\begin{eqnarray}\label{Equality_4_i}
&&\|(I-E_{k,h})\mathcal P_hu_i\|_a^2 = \Big\|\sum_{j=k+1}^{N_h}\alpha_j\bar u_{j,h}\Big\|_a^2
= \sum_{j=k+1}^{N_h}\alpha_j^2\nonumber\\
&&=\sum_{j=k+1}^{N_h} \Big(\frac{1}{\mu_i-\bar\mu_{j,h}}\Big)^2 b\big(u_i-\mathcal P_hu_i,\bar u_{j,h}\big)^2
\leq\frac{1}{\delta_{k,i,h}^2}\sum_{j=k+1}^{N_h}\|\bar u_{j,h}\|_b^2
b\Big(u_i-\mathcal P_hu_i,\frac{\bar u_{j,h}}{\|\bar u_{j,h}\|_b}\Big)^2\nonumber\\
&&=\frac{1}{\delta_{k,i,h}^2}\sum_{j=k+1}^{N_h}\bar\mu_{j,h}
b\Big(u_i-\mathcal P_hu_i,\frac{\bar u_{j,h}}{\|\bar u_{j,h}\|_b}\Big)^2\nonumber\\
&&
\leq \frac{\bar\mu_{k+1,h}}{\delta_{k,i,h}^2}\sum_{j=k+1}^{N_h}
b\Big(u_i-\mathcal P_hu_i,\frac{\bar u_{j,h}}{\|\bar u_{j,h}\|_b}\Big)^2
\leq \frac{\bar\mu_{k+1,h}}{\delta_{k,i,h}^2}\|u_i-\mathcal P_hu_i\|_b^2,
\end{eqnarray}
\revise{where the last inequality holds since $\frac{\bar u_{1,h}}{\|\bar u_{1,h}\|_b}$, $\cdots$,
$\frac{\bar u_{j,h}}{\|\bar u_{j,h}\|_b}$ are the normalorthogonal basis for the space $V_h$
in the sense of the inner product $b(\cdot, \cdot)$.}

Combining (\ref{Upper_Bound_Result}) and (\ref{Equality_4_i}) leads to the following inequality
\begin{eqnarray}\label{Equality_5_k}
\|(I-E_{k,h})\mathcal P_hu_i\|_a^2
\leq\frac{\bar\mu_{k+1,h}}{\delta_{k,i,h}^2}\eta_a(V_h)^2\|(I-\mathcal P_h)u_i\|_a^2
\leq\frac{\mu_{k+1}}{\delta_{k,i,h}^2}\eta_a(V_h)^2\|(I-\mathcal P_h)u_i\|_a^2.
\end{eqnarray}

From (\ref{Equality_5_k}) and the orthogonal property
$a(u_i-\mathcal P_hu_i, (I-E_{k,h})\mathcal P_hu_i)=0$,
we have  following error estimate
\begin{eqnarray*}
\|u_i-E_{k,h}u_i\|_a^2&=&\|u_i-\mathcal P_hu_i\|_a^2
+\|(I-E_{k,h})\mathcal P_hu_i\|_a^2\nonumber\\
&\leq&\Big(1+\frac{\mu_{k+1}}{\delta_{k,i,h}^2}\eta_a(V_h)^2\Big)
\|(I-\mathcal P_h)u_i\|_a^2.
\end{eqnarray*}
This is the desired result (\ref{Energy_Error_Estimate_k}).

Similarly, from (\ref{Upper_Bound_Result}) ,  (\ref{Orthogonal_Decomposition_k}),
(\ref{Alpha_Estimate}), (\ref{Equality_u_j}) and (\ref{Orthonormal_Basis}),
we have following estimates
\begin{eqnarray*}
&&\|(I-E_{k,h})\mathcal P_hu_i\|_b^2 = \Big\|\sum_{j=k+1}^{N_h}\alpha_j\bar u_{j,h}\Big \|_b^2
= \sum_{j=k+1}^{N_h}\alpha_j^2\|\bar u_{j,h}\|_b^2\nonumber\\
&&=\sum_{j=k+1}^{N_h} \Big(\frac{1}{\mu_i-\bar\mu_{j,h}}\Big)^2
b\big(u_i-\mathcal P_hu_i,\bar u_{j,h}\big)^2\|\bar u_{j,h}\|_b^2
\leq\frac{1}{\delta_{k,i,h}^2}\sum_{j=k+1}^{N_h}\|\bar u_{j,h}\|_b^4\
b\Big(u_i-\mathcal P_hu_i,\frac{\bar u_{j,h}}{\|\bar u_{j,h}\|_b}\Big)^2\nonumber\\
&&=\frac{1}{\delta_{k,i,h}^2}\sum_{j=k+1}^{N_h}\bar\mu_{j,h}^2
b\Big(u_i-\mathcal P_hu_i,\frac{\bar u_{j,h}}{\|\bar u_{j,h}\|_b}\Big)^2
\leq \frac{\bar\mu_{k+1,h}^2}{\delta_{k,i,h}^2}\|u_i-\mathcal P_hu_i\|_b^2
\leq \frac{\mu_{k+1}^2}{\delta_{k,i,h}^2}\|u_i-\mathcal P_hu_i\|_b^2,
\end{eqnarray*}
which leads to the inequality
\begin{eqnarray}\label{Equality_8_k}
\|(I-E_{k,h})\mathcal P_hu_i\|_b \leq \frac{\mu_{k+1}}{\delta_{k,i,h}}\|u_i-\mathcal P_hu_i\|_b.
\end{eqnarray}
From (\ref{L2_Energy_Estiate}), (\ref{Equality_8_k}) and the triangle inequality, we have
following error estimates for the eigenvector approximations in the $\|\cdot\|_b$-norm
\begin{eqnarray*}\label{Inequality_11}
&&\|u_i-E_{k,h}u_i\|_b\leq \|u_i-\mathcal P_hu_i\|_b + \|(I-E_{k,h})
\mathcal P_hu_i\|_b\nonumber\\
&&\leq\Big(1+\frac{\mu_{k+1}}{\delta_{k,i,h}}\Big)
\|(I-\mathcal P_h)u_i\|_b\leq \Big(1+\frac{\mu_{k+1}}{\delta_{k,i,h}}\Big)
\eta_a(V_h)\|(I-\mathcal P_h)u_i\|_a\nonumber\\
&&\leq \Big(1+\frac{\mu_{k+1}}{\delta_{k,i,h}}\Big)\eta_a(V_h)\|u_i-E_{k,h}u_i\|_a.
\end{eqnarray*}
This is the second desired result (\ref{L2_Error_Estimate_k}) and the proof is complete.
\end{proof}
Similarly, we assume that the eigenvalue gap $\delta_{k,i,h}$
has a uniform lower bound which is denoted by $\delta_{k,i}$ (which can be seen as the
``true" separation of the eigenvalue $\lambda_i$ from the unwanted eigenvalues)
in the following parts of this paper.
This assumption is reasonable when the mesh size is small enough.
Then we have the following simple version of the error estimates
based on Theorem \ref{Error_Estimate_Theorem_k}.
\begin{corollary}\label{Error_Estimate_Corollary_k}
Under the conditions of Theorem \ref{Error_Estimate_Theorem_k}, the following error estimates hold
\begin{eqnarray}
&&\|u_i - E_{k,h} u_i\|_a \leq \sqrt{1+\frac{1}{\lambda_{k+1}\delta_{k,i}^2}\eta_a^2(V_h)}\|(I-\mathcal P_h)u_i\|_a,
\ \ \ \  1\leq i\leq k,\label{Energy_Error_Estimate_k}\\
&&\|u_i-E_{k,h}u_i\|_b \leq \bar\eta_a(V_h)\|u_i-E_{k,h}u_i\|_a,\ \ \ 1\leq i\leq k,\label{L2_Error_Estimate_k}
\end{eqnarray}
where $\bar\eta_a(V_h)$ is defined as follows
\begin{eqnarray*}
\bar\eta_a(V_h) = \Big(1+\frac{\mu_{k+1}}{\delta_{k,i}}\Big)\eta_a(V_h).
\end{eqnarray*}
\end{corollary}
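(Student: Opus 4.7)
The plan is to derive the corollary as a direct simplification of Theorem \ref{Error_Estimate_Theorem_k} by replacing the mesh-dependent eigenvalue gap $\delta_{k,i,h}$ with its uniform lower bound $\delta_{k,i}$. Since no new estimates need to be established and the corollary is essentially a renaming/relaxation of the theorem's bounds, the argument is a short monotonicity step.

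First I would invoke Theorem \ref{Error_Estimate_Theorem_k} to obtain the estimates
\begin{eqnarray*}
\|u_i-E_{k,h}u_i\|_a &\leq& \sqrt{1+\frac{1}{\lambda_{k+1}\delta_{k,i,h}^2}\eta_a^2(V_h)}\|(I-\mathcal P_h)u_i\|_a,\\
\|u_i-E_{k,h}u_i\|_b &\leq& \Big(1+\frac{\mu_{k+1}}{\delta_{k,i,h}}\Big)\eta_a(V_h)\|u_i-E_{k,h}u_i\|_a,
\end{eqnarray*}
which are valid for each $1\leq i\leq k$. Next I would use the standing assumption that $\delta_{k,i,h}\geq \delta_{k,i}$ for the mesh sizes under consideration, which is reasonable because under the convergence of the finite element method, the discrete eigenvalues $\bar\lambda_{j,h}$ approach their continuous counterparts and the gap from $\lambda_i$ stabilizes above a positive constant. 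This yields $1/\delta_{k,i,h}\leq 1/\delta_{k,i}$ and $1/\delta_{k,i,h}^2\leq 1/\delta_{k,i}^2$.

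Substituting these inequalities into the two bounds above immediately produces \eqref{Energy_Error_Estimate_k} and, after recognizing the factor $\bigl(1+\mu_{k+1}/\delta_{k,i}\bigr)\eta_a(V_h)$ as $\bar\eta_a(V_h)$, also \eqref{L2_Error_Estimate_k}. There is really no obstacle of substance here; the only point meriting a sentence of justification is why $\delta_{k,i}$ is well-defined as a positive quantity independent of $h$, which is precisely the content of the uniform-gap assumption discussed just before the corollary statement. No further calculation is required and the proof concludes.
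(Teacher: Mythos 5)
Your proposal is correct and coincides with the paper's (implicit) argument: the corollary is obtained from Theorem \ref{Error_Estimate_Theorem_k} simply by invoking the standing assumption $\delta_{k,i,h}\geq\delta_{k,i}>0$ and the resulting monotonicity $1/\delta_{k,i,h}\leq 1/\delta_{k,i}$, $1/\delta_{k,i,h}^2\leq 1/\delta_{k,i}^2$. The paper does not even write out a separate proof, so your short monotonicity step is precisely what is intended.
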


\begin{remark}
When $1\leq i\leq k$ in (\ref{Energy_Error_Estimate}), it is easy to find that the estimate (\ref{Energy_Error_Estimate_k})
is less than (\ref{Energy_Error_Estimate}) since we have the following inequalities
\begin{eqnarray*}
\frac{1}{\lambda_{k+1}} \leq \frac{1}{\lambda_1},\ \ \ \
\frac{1}{\delta_{k,i}} \leq \frac{1}{\delta_{\lambda}}.
\end{eqnarray*}
\end{remark}
From Lemma \ref{Error_Estimate_Theorem_Old}, Theorem \ref{Error_Estimate_Theorem_k} and their proofs,
we can extend the error estimates in this section to the case that the subspace is $V_{H,h}$ and
the space $V$ is replaced by $V_h$. This understanding will be used to deduce the error estimates for the augmented subspace methods in the following section.

\section{Augmented subspace method and its error estimates}\label{Section_3}
In this section, we first present the augmented subspace method for solving the eigenvalue problem (\ref{Weak_Eigenvalue_Discrete}).
This method contains solving auxiliary linear boundary value problem
in the finer finite element space $V_h$ and the eigenvalue problem on the
augmented subspace $V_{H,h}$ which is built by the coarse finite element space $V_H$
and a finite element function in the finer finite element space $V_h$.
Then, the new convergence analysis is given for this augmented subspace method.
We will find the new convergence result is sharper than the existed results in \cite{LinXie_MultiLevel,Xie_IMA,Xie_JCP,XieZhangOwhadi,XuXieZhang}.

In order to define the augmented subspace method, we first generate a coarse mesh $\mathcal{T}_H$
with the mesh size $H$ and the coarse \comm{linear} finite element space $V_H$ is
defined on the mesh $\mathcal{T}_H$.    For simplicity, in this paper, we assume the coarse space
$V_H$ is a subspace of the finite element space $V_h$ which is defined on the finer mesh $\mathcal T_h$.

For some given eigenfunction approximations $u_{1,h}^{(\ell)},\cdots, u_{k,h}^{(\ell)}$ which are
approximations for the first $k$ eigenfunctions $\bar u_{1,h},\cdots, \bar u_{k,h}$
of (\ref{Weak_Eigenvalue_Discrete}), we can do the following augmented subspace iteration step
which is defined by Algorithm \ref{Algorithm_k} to improve the accuracy of $u_{1,h}^{(\ell)},\cdots, u_{k,h}^{(\ell)}$.
\begin{algorithm}[hbt!]
\caption{Augmented subspace method for the first $k$ eigenpairs }\label{Algorithm_k}
\begin{enumerate}
\item If $\ell=1$, we define $\widehat u_{i,h}^{(\ell)}=u_{i,h}^{(\ell)}$, $i=1,\cdots, k$,   and
the augmented subspace $V_{H,h} = V_H +{\rm span}\{\widehat u_{1,h}^{(\ell)}, \cdots, \widehat u_{k,h}\}$.
Then solve the following eigenvalue problem:
Find $(\lambda_{i,h}^{(\ell)},u_{i,h}^{(\ell)})\in \mathcal{R}\times V_{H,h}$
such that $a(u_{i,h}^{(\ell)},u_{i,h}^{(\ell)})=1$ and
\begin{equation}\label{parallel_correct_eig_exact_1}
a(u_{i,h}^{(\ell)},v_{H,h}) = \lambda_{i,h}^{(\ell)}b(u_{i,h}^{(\ell)},v_{H,h}),
\ \ \ \ \ \forall v_{H,h}\in V_{H,h},\ \ \ i=1, \cdots, k.
\end{equation}

\item Solve the following linear boundary value problems:
Find $\widehat{u}_{i,h}^{(\ell+1)}\in V_h$ such that
\begin{equation}\label{Linear_Equation_k}
a(\widehat{u}_{i,h}^{(\ell+1)},v_h) = \lambda_{i,h}^{(\ell)}b(u_{i,h}^{(\ell)},v_h),
\ \  \forall v_h\in V_h,\ \ \ i=1, \cdots, k.
\end{equation}

\item Define the augmented subspace $V_{H,h} = V_H +
{\rm span}\{\widehat{u}_{1,h}^{(\ell+1)}, \cdots, \widehat u_{k,h}^{(\ell+1)}\}$ and solve the following eigenvalue problem:
Find $(\lambda_{i,h}^{(\ell+1)},u_{i,h}^{(\ell+1)})\in \mathcal{R}\times V_{H,h}$
such that $a(u_{i,h}^{(\ell+1)},u_{i,h}^{(\ell+1)})=1$ and
\begin{equation}\label{Aug_Eigenvalue_Problem_k}
a(u_{i,h}^{(\ell+1)},v_{H,h}) = \lambda_{i,h}^{(\ell+1)}b(u_{i,h}^{(\ell+1)},v_{H,h}),
\ \ \ \ \ \forall v_{H,h}\in V_{H,h},\ \ \ i=1, \cdots, k.
\end{equation}
Solve (\ref{Aug_Eigenvalue_Problem_k})  to obtain $(\lambda_{1,h}^{(\ell+1)},u_{1,h}^{(\ell+1)}), \cdots,
(\lambda_{k,h}^{(\ell+1)},u_{k,h}^{(\ell+1)})$.
\item Set $\ell=\ell+1$ and go to Step 2 for the next iteration until convergence.
\end{enumerate}
\end{algorithm}
\begin{theorem}\label{Theorem_Error_Estimate_k}
Let us define the spectral projection $E_{k,h}^{(\ell+1)}: V\rightarrow {\rm span}\{\bar u_{1,h}^{(\ell+1)}, \cdots, \bar u_{k,h}^{(\ell+1)}\}$ for any integer $\ell\geq 0$
as follows
\begin{eqnarray}
a(E_{k,h}^{(\ell+1)}w, u_{i,h}^{(\ell+1)}) = a(w, u_{i,h}^{(\ell+1)}), \ \ \ i=1, \cdots, k\ \ {\rm for}\ w\in V.
\end{eqnarray}
There exist exact eigenfunction $\bar u_{1,h},\cdots, \bar u_{k,h}$ of (\ref{Weak_Eigenvalue_Discrete}) such that the resultant eigenfunction
approximations $u_1^{(\ell+1)},\cdots, u_k^{(\ell+1)}$ have the following error estimate
\begin{eqnarray}\label{Error_Estimate_Inverse}
\big\|\bar u_{i,h} -E_{k,h}^{(\ell+1)}u_{i,h} \big\|_a \leq
\bar\lambda_{i,h} \sqrt{1+\frac{\eta_a^2(V_H)}{\lambda_{k+1}\big(\delta_{k,k}^{(\ell+1)}\big)^2}}
\left(1+\frac{\mu_{k+1}}{\delta_{k,i}^{(\ell+1)}}\right)\eta_a^2(V_H)\big\|\bar u_{i,h} - E_{k,h}^{(\ell)}u_{i,h} \big\|_a.
\end{eqnarray}
Furthermore, the following $\|\cdot\|_b$-norm error estimate hold
\begin{eqnarray}\label{L2_Error_Estimate_Algorithm_1}
&&\|\bar u_{i,h} -E_{k,h}^{(\ell+1)}\bar u_{i,h} \|_b\leq \left(1+\frac{\mu_{k+1}}{\delta_{k,i}^{(\ell+1)}}\right)\eta_a(V_H) \|\bar u_{i,h} -E_{k,h}^{(\ell+1)}\bar u_{i,h}\|_a.
\end{eqnarray}
\end{theorem}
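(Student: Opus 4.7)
The strategy is to view the augmented iteration as a Galerkin approximation of the discrete eigenproblem on $V_h$ by the subspace $V_{H,h}$, so that Theorem \ref{Error_Estimate_Theorem_k} can be applied ``one level up''. Introduce the discrete source operator $\bar T_h:V_h\to V_h$ defined by $a(\bar T_h f,v_h)=b(f,v_h)$ for all $v_h\in V_h$, and let $\mathcal P_{H,h}$ denote the $a$-projection onto $V_{H,h}$. The monotonicity $\eta_a(V_{H,h})\le \eta_a(V_H)$, which follows immediately from $V_H\subset V_{H,h}$, together with the discrete analogue of Theorem \ref{Error_Estimate_Theorem_k}, yields
\begin{eqnarray*}
\|\bar u_{i,h}-E_{k,h}^{(\ell+1)}\bar u_{i,h}\|_a
\le \sqrt{1+\frac{\eta_a^2(V_H)}{\lambda_{k+1}\bigl(\delta_{k,i}^{(\ell+1)}\bigr)^2}}\,
\|(I-\mathcal P_{H,h})\bar u_{i,h}\|_a,
\end{eqnarray*}
and, in exactly the same way, the companion $\|\cdot\|_b$ bound that is precisely (\ref{L2_Error_Estimate_Algorithm_1}). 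So the real work is to show that the best-approximation error on the right is of order $\eta_a^2(V_H)\|\bar u_{i,h}-E_{k,h}^{(\ell)}\bar u_{i,h}\|_a$.

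To extract the first power of $\eta_a(V_H)$ I would use the specific form of the augmented subspace. Equation (\ref{Linear_Equation_k}) gives $\widehat u_{i,h}^{(\ell+1)}=\bar T_h(\lambda_{i,h}^{(\ell)}u_{i,h}^{(\ell)})$, while (\ref{Weak_Eigenvalue_Discrete}) gives $\bar u_{i,h}=\bar T_h(\bar\lambda_{i,h}\bar u_{i,h})$; setting $g:=\bar\lambda_{i,h}\bar u_{i,h}-\lambda_{i,h}^{(\ell)}u_{i,h}^{(\ell)}$ therefore leads to the key identity $\bar u_{i,h}-\widehat u_{i,h}^{(\ell+1)}=\bar T_h g$. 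Since $\widehat u_{i,h}^{(\ell+1)}\in V_{H,h}$ and $V_H\subset V_{H,h}$, adding an arbitrary $v_H\in V_H$ and invoking the defining property (\ref{eta_a_h_Def}) of $\eta_a(V_H)$ applied to $\bar T_h$ produces
\begin{eqnarray*}
\|(I-\mathcal P_{H,h})\bar u_{i,h}\|_a
\le \inf_{v_H\in V_H}\|\bar T_h g-v_H\|_a
\le \eta_a(V_H)\,\|g\|_b.
\end{eqnarray*}

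The main obstacle is then the remaining estimate $\|g\|_b\lesssim \bar\lambda_{i,h}\,\eta_a(V_H)\,\|\bar u_{i,h}-E_{k,h}^{(\ell)}\bar u_{i,h}\|_a$, which must deliver the second factor of $\eta_a(V_H)$. For this I would split
\begin{eqnarray*}
g=\bar\lambda_{i,h}\bigl(\bar u_{i,h}-E_{k,h}^{(\ell)}\bar u_{i,h}\bigr)
+\bigl(\bar\lambda_{i,h}E_{k,h}^{(\ell)}\bar u_{i,h}-\lambda_{i,h}^{(\ell)}u_{i,h}^{(\ell)}\bigr),
\end{eqnarray*}
bound the first piece in $\|\cdot\|_b$ by the Aubin--Nitsche estimate (\ref{L2_Error_Estimate_k}) of Corollary \ref{Error_Estimate_Corollary_k} applied at iteration $\ell$, which produces exactly the required factor $\bar\eta_a(V_H)\|\bar u_{i,h}-E_{k,h}^{(\ell)}\bar u_{i,h}\|_a$, and dominate the second (``cross'') piece by expanding $E_{k,h}^{(\ell)}\bar u_{i,h}$ in the $a$-orthonormal basis $\{u_{j,h}^{(\ell)}\}_{j=1}^{k}$, invoking the Rayleigh--Ritz identities (\ref{parallel_correct_eig_exact_1}) together with a discrete Strang-type equality in the spirit of Lemma \ref{Strang_Lemma} to reduce it to a residual of the same order. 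This cross-term analysis is the technically delicate point, as it couples the eigenvalue residual $|\bar\lambda_{i,h}-\lambda_{i,h}^{(\ell)}|$ with the rotation between $\bar u_{i,h}$ and the cluster $\mathrm{span}\{u_{1,h}^{(\ell)},\ldots,u_{k,h}^{(\ell)}\}$.

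Multiplying the three bounds assembled above produces (\ref{Error_Estimate_Inverse}), the leading prefactor $\sqrt{1+\cdots}$ coming from the first display, one $\eta_a(V_H)$ from the source-operator argument, a second $\eta_a(V_H)$ from the $\|g\|_b$ bound, and the $(1+\mu_{k+1}/\delta_{k,i}^{(\ell+1)})$ factor being inherited from the Aubin--Nitsche constant $\bar\eta_a(V_H)$ used at iteration $\ell$; the $\|\cdot\|_b$ assertion (\ref{L2_Error_Estimate_Algorithm_1}) was already obtained at the outset as the direct discrete analogue of (\ref{L2_Error_Estimate_k}).
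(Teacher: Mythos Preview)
Your overall architecture---reducing to Theorem \ref{Error_Estimate_Theorem_k} applied on $V_{H,h}\subset V_h$ and then bounding $\|(I-\mathcal P_{H,h})\bar u_{i,h}\|_a$---matches the paper exactly, and your derivation of (\ref{L2_Error_Estimate_Algorithm_1}) is fine. The gap is in the second step. By subtracting only the \emph{single} augmenting function $\widehat u_{i,h}^{(\ell+1)}$, you are forced to control $\|g\|_b$ with $g=\bar\lambda_{i,h}\bar u_{i,h}-\lambda_{i,h}^{(\ell)}u_{i,h}^{(\ell)}$, and the ``cross'' piece $\bar\lambda_{i,h}E_{k,h}^{(\ell)}\bar u_{i,h}-\lambda_{i,h}^{(\ell)}u_{i,h}^{(\ell)}$ is \emph{not} of order $\eta_a(V_H)\|\bar u_{i,h}-E_{k,h}^{(\ell)}\bar u_{i,h}\|_a$ in general. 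If two of the first $k$ eigenvalues are close or equal, $E_{k,h}^{(\ell)}\bar u_{i,h}$ may be essentially a rotation inside ${\rm span}\{u_{j,h}^{(\ell)}\}_{j=1}^k$ and have an $O(1)$ component on some $u_{j,h}^{(\ell)}$ with $j\neq i$; then the cross term is $O(1)$ in $\|\cdot\|_b$ while $\|\bar u_{i,h}-E_{k,h}^{(\ell)}\bar u_{i,h}\|_a$ is small. A Strang-type identity cannot repair this without introducing gaps \emph{within} $\{\bar\lambda_{1,h},\ldots,\bar\lambda_{k,h}\}$, which do not appear in the stated constants.

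The paper avoids the cross term altogether by exploiting that $V_{H,h}$ contains \emph{all} of $\widehat u_{1,h}^{(\ell+1)},\ldots,\widehat u_{k,h}^{(\ell+1)}$, not just the $i$-th one. Writing the $b$-orthogonal projection $\pi_{k,h}^{(\ell)}\bar u_{i,h}=\sum_{j=1}^k q_j u_{j,h}^{(\ell)}$, it inserts the combination $\sum_{j=1}^k(\bar\lambda_{i,h}q_j/\lambda_{j,h}^{(\ell)})\,\widehat u_{j,h}^{(\ell+1)}\in V_{H,h}$ into $a(\bar u_{i,h}-\cdot\,,\,\bar u_{i,h}-\mathcal P_{H,h}\bar u_{i,h})$; equation (\ref{Linear_Equation_k}) then converts this \emph{exactly} into $\bar\lambda_{i,h}\,b\big(\bar u_{i,h}-\pi_{k,h}^{(\ell)}\bar u_{i,h},\,\bar u_{i,h}-\mathcal P_{H,h}\bar u_{i,h}\big)$, so after Cauchy--Schwarz, the optimality $\|\bar u_{i,h}-\pi_{k,h}^{(\ell)}\bar u_{i,h}\|_b\le\|\bar u_{i,h}-E_{k,h}^{(\ell)}\bar u_{i,h}\|_b$, and the Aubin--Nitsche bound on $\|\bar u_{i,h}-\mathcal P_{H,h}\bar u_{i,h}\|_b$, one obtains
\[
\|\bar u_{i,h}-\mathcal P_{H,h}\bar u_{i,h}\|_a\ \le\ \bar\lambda_{i,h}\,\bar\eta_a(V_H)\,\eta_a(V_H)\,\|\bar u_{i,h}-E_{k,h}^{(\ell)}\bar u_{i,h}\|_a
\]
with no cross-term analysis at all. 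Replacing your single-vector identity $\bar u_{i,h}-\widehat u_{i,h}^{(\ell+1)}=\bar T_h g$ by this $k$-vector test function is precisely the missing idea.
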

\begin{proof}
First, let us consider the error estimate for the initial approximations $u_{1,h}^{(1)}, \cdots, u_{k,h}^{(1)}$.
From Corollary \ref{Error_Estimate_Corollary_k}, there exist exact eigenvectors $\bar u_{1,h},\cdots, \bar u_{k,h}$ such that the following error estimates for the eigenvector approximations
$u_{1,h}^{(1)},\cdots,  u_{k,h}^{(1)}$ hold for $i=1, \cdots, k$
\begin{eqnarray*}
\|\bar u_{i,h}- E_{k,h}^{(1)}\bar u_{i,h}\|_a
&\leq& \sqrt{1+\frac{\eta_a^2(V_{H,h})}{\lambda_{k+1}\big(\delta_{k,i}^{(1)}\big)^2}}
\|(I-\mathcal P_{H,h})\bar u_{i,h}\|_a\nonumber\\
&\leq&   \sqrt{1+\frac{\eta_a^2(V_H)}{\lambda_{k+1}\big(\delta_{k,i}^{(1)}\big)^2}}
\|(I-\mathcal P_{H,h})\bar u_{i,h}\|_a,
\end{eqnarray*}
and
\begin{eqnarray}\label{Inequality_13}
&&\|\bar u_{i,h}- E_{k,h}^{(1)}\bar u_{i,h}\|_b\leq \left(1+\frac{\mu_{k+1}}{\delta_{k,i}^{(1)}}\right)\eta_a(V_H)
\|\bar u_{i,h}- E_{k,h}^{(1)}\bar u_{i,h}\|_a,
\end{eqnarray}
where we have used the inequality $\eta_a(V_{H,h})\leq \eta_a(V_H)$ since $V_H\subset V_{H,h}$.

Then the result (\ref{L2_Error_Estimate_Algorithm_1})
holds  for $\ell=1$. Here the induction method is adopted to prove that
(\ref{Error_Estimate_Inverse}) and (\ref{L2_Error_Estimate_Algorithm_1})
hold for any $\ell\geq 1$. For this aim, we assume the estimates
(\ref{Error_Estimate_Inverse}) and (\ref{L2_Error_Estimate_Algorithm_1})
holds for $\ell-1$. Then let us prove that they also hold for $\ell$ based on this assumption.

From Algorithm \ref{Algorithm_k}, it is easy to know that $u_{1,h}^{(\ell)}, \cdots, u_{k,h}^{(\ell)}$
is the orthogonal basis for the space
${\rm span}\{u_{1,h}^{(\ell)}, \cdots, u_{k,h}^{(\ell)}\}$. We define the $b(\cdot,\cdot)$-orthogonal
projection operator $\pi_{k,h}^{(\ell)}$ to the space ${\rm span}\{u_{1,h}^{(\ell)}$, $\cdots$, $u_{k,h}^{(\ell)}\}$.
Then there exist $k$ real numbers $q_1, \cdots, q_k \in \mathbb R$ such that $\pi_{k,h}^{(\ell)}\bar u_{i,h}$ has following expansion
\begin{eqnarray}\label{Expansion_L2}
\pi_{k,h}^{(\ell)}\bar u_{i,h} = \sum_{j=1}^k q_ju_{j,h}^{(\ell)}.
\end{eqnarray}
From the orthogonal property of the projection operator $\mathcal P_{H,h}$, (\ref{L2_Error_Estimate_k}),  (\ref{Linear_Equation_k}), (\ref{Inequality_13}),
(\ref{Expansion_L2}) and induction assumption, the following inequalities hold
\begin{eqnarray}\label{Inequality_16}
&&\|\bar u_{i,h} - \mathcal P_{H,h}\bar u_{i,h}\|_a^2 =
a(\bar u_{i,h} - \mathcal P_{H,h}\bar u_{i,h}, \bar u_{i,h} - \mathcal P_{H,h}\bar u_{i,h})\nonumber\\
&&=a\Big(\bar u_{i,h} - \sum_{j=1}^k\bar\lambda_{i,h}\frac{q_j}{\lambda_{j,h}^{(\ell)}}
\widehat u_{j,h}^{(\ell+1)}, \bar u_{i,h} - \mathcal P_{H,h}\bar u_{i,h}\Big)\nonumber\\
&&=\bar\lambda_{i,h} b\Big(\bar u_{i,h} - \sum_{j=1}^k\frac{q_j}{\lambda_{j,h}^{(\ell)}}\lambda_{j,h}^{(\ell)}u_{j,h}^{(\ell)}, \bar u_{i,h} - \mathcal P_{H,h}\bar u_{i,h}\Big)\nonumber\\
&&=\bar\lambda_{i,h} b\Big(\bar u_{i,h} - \sum_{j=1}^kq_ju_{j,h}^{(\ell)}, \bar u_{i,h} - \mathcal P_{H,h}\bar u_{i,h}\Big)
=\bar\lambda_{i,h} b\Big(\bar u_{i,h} - \pi_{k,h}^{(\ell)}\bar u_{i,h}, \bar u_{i,h} - \mathcal P_{H,h}\bar u_{i,h}\Big)\nonumber\\
&&\leq \bar\lambda_{i,h}\big\|\bar u_{i,h} - \pi_{k,h}^{(\ell)}\bar u_{i,h}\big\|_b\big\|\bar u_{i,h} - \mathcal P_{H,h}\bar u_{i,h}\big\|_b\nonumber\\
&&\leq \bar\lambda_{i,h}\big\|\bar u_{i,h} - E_{k,h}^{(\ell)}\bar u_{i,h}\big\|_b\big\|\bar u_{i,h} - \mathcal P_{H,h}\bar u_{i,h}\big\|_b\nonumber\\
&&\leq  \bar\lambda_{i,h} \bar\eta_a(V_{H})\big\|\bar u_{i,h}- E_{k,h}^{(\ell)}\bar u_{i,h}\big\|_a \eta_a(V_H)\big\|\bar u_{i,h} - \mathcal P_{H,h}\bar u_{i,h}\big\|_a.
\end{eqnarray}
Then from (\ref{Inequality_16}), we have the following estimate
\begin{eqnarray}\label{Inequality_18_k}
\Big\|\bar u_{i,h} - \mathcal P_{H,h}\bar u_{i,h}\Big\|_a \leq \bar\lambda_{i,h} \bar\eta_a(V_{H})
\eta_a(V_H)\big\|\bar u_{i,h}- E_{k,h}^{(\ell)}\bar u_{i,h}\big\|_a.
\end{eqnarray}
Combining Corollary \ref{Error_Estimate_Corollary_k} and (\ref{Inequality_18_k}) leads to the following estimate
\begin{eqnarray}\label{Inequality_19}
\big\|\bar u_{i,h} -E_{k,h}^{(\ell+1)}\bar u_{i,h}\big\|_a \leq \bar\lambda_{i,h} \sqrt{1+\frac{\eta_a^2(V_H)}{\lambda_{k+1}
\big(\delta_{k,i}^{(\ell+1)}\big)^2}} \bar\eta_a(V_{H})
\eta_a(V_H)\big\|\bar u_{i,h}- E_{k,h}^{(\ell)}\bar u_{i,h}\big\|_a.
\end{eqnarray}
Similarly to the proof of Theorem \ref{Error_Estimate_Theorem_k}, we have the following $\|\cdot\|_b$-error estimate
\begin{eqnarray}\label{Inequality_17}
&&\|\bar u_{i,h}-E_{k,h}^{(\ell+1)}\bar u_{i,h}\|_b\leq \left(1+\frac{\mu_{k+1}}{\delta_{k,i}^{(\ell+1)}}\right)
\eta_a(V_H)\|\bar u_{i,h}-E_{k,h}^{(\ell+1)}\bar u_{i,h}\|_a.
\end{eqnarray}
From (\ref{Inequality_19}) and (\ref{Inequality_17}), we know that the estimates (\ref{Error_Estimate_Inverse})
and (\ref{L2_Error_Estimate_Algorithm_1}) holds for the integer $\ell$. Then the proof is complete.
\end{proof}
\begin{remark}
From the convergence result (\ref{Error_Estimate_Inverse}) in Theorem \ref{Theorem_Error_Estimate_k},
in order to accelerate the convergence rate, we should
decrease the term $\eta_a(V_H)$ which depends on the coarse space $V_H$.
Then enlarging the subspace $V_H$ can accelerate the convergence.
\end{remark}
\begin{remark}\label{Remark_Eigenvalue}
In this paper, we are only concerned with the error estimates for the eigenvector
approximation since the error estimates for the eigenvalue approximation
can be easily deduced from the following error expansion
\begin{eqnarray*}\label{rayexpan}
0\leq \widehat{\lambda}_i-\bar\lambda_{i,h}
=\frac{\big(A(\bar u_{i,h}-\psi),\bar u_{i,h}-\psi\big)}{(\psi,\psi)}-\bar\lambda_{i,h}
\frac{\big(\bar u_{i,h}-\psi,\bar u_{i,h}-\psi\big)}{(\psi,\psi)}\leq \frac{\|\bar u_{i,h}-\psi\|_a^2}{\|\psi\|_b^2},
\end{eqnarray*}
where $\psi$ is the eigenvector approximation for the exact eigenvector $\bar u_{i,h}$ and
\begin{eqnarray*}
\widehat{\lambda}_i=\frac{(A\psi,\psi)}{(\psi,\psi)}.
\end{eqnarray*}
\end{remark}
It is obvious that the parallel computing method can be used for Step 2 of Algorithm \ref{Algorithm_k}
since each linear equation can be solved independently. Furthermore, the augmented subspace method
can be used to design a complete parallel scheme for eigenvalue problems.
For this aim,  we give another version of the augmented subspace method for only one (may be not the smallest one) eigenpair.
The corresponding numerical method is defined by Algorithm \ref{Algorithm_1}.
This idea has already been proposed and analyzed in \cite{XuXieZhang}.
But, we will give a sharper error estimate for this type of method.

In this section,  we assume the given eigenpair
approximation $(\lambda_h^{(\ell)}, u_h^{(\ell)})\in\mathcal R\times V_h$  with different superscript is closet
to an exact eigenpair $(\bar\lambda_h, \bar u_h)$ of (\ref{Weak_Eigenvalue_Discrete}).
Based on these settings, we can give the following convergence result for the augmented
subspace method defined by Algorithm \ref{Algorithm_1}.
\begin{algorithm}[hbt!]
\caption{Augmented subspace method for one eigenpair}\label{Algorithm_1}
\begin{enumerate}
\item If $\ell=1$, we define $\widehat u_h^{(\ell)}=u_h^{(\ell)}$  and
the augmented subspace $V_{H,h} = V_H +{\rm span}\{\widehat u_h^{(\ell)}\}$.
Then solve the following eigenvalue problem:
Find $(\lambda_h^{(\ell)},u_h^{(\ell)})\in \mathcal{R}\times V_{H,h}$
such that $a(u_h^{(\ell)},u_h^{(\ell)})=1$ and
\begin{equation}\label{parallel_correct_eig_exact}
a(u_h^{(\ell)},v_{H,h}) = \lambda_h^{(\ell)}b(u_h^{(\ell)},v_{H,h}),
\ \ \ \ \ \forall v_{H,h}\in V_{H,h}.
\end{equation}

\item Solve the following linear boundary value problem:
Find $\widehat{u}_h^{(\ell+1)}\in V_h$ such that
\begin{equation}\label{Linear_Equation}
a(\widehat{u}_h^{(\ell+1)},v_h) = \lambda_{h}^{(\ell)}b(u_{h}^{(\ell)},v_h),
\ \  \forall v_h\in V_h.
\end{equation}
\item Define the augmented subspace $V_{H,h} = V_H +
{\rm span}\{\widehat{u}_h^{(\ell+1)}\}$ and solve the following eigenvalue problem:
Find $(\lambda_h^{(\ell+1)},u_h^{(\ell+1)})\in \mathcal{R}\times V_{H,h}$
such that $a(u_h^{(\ell+1)},u_h^{(\ell+1)})=1$ and
\begin{equation}\label{parallel_correct_eig_exact}
a(u_h^{(\ell+1)},v_{H,h}) = \lambda_h^{(\ell+1)}b(u_h^{(\ell+1)},v_{H,h}),
\ \ \ \ \ \forall v_{H,h}\in V_{H,h}.
\end{equation}
Solve (\ref{parallel_correct_eig_exact}) and the output $(\lambda_h^{(\ell+1)},u_h^{(\ell+1)})$
is chosen such that $u_h^{(\ell+1)}$ has the largest component in ${\rm span}\{\ \widehat{u}_h^{(\ell+1)}\}$
among all eigenfunctions of (\ref{parallel_correct_eig_exact}).
\item Set $\ell=\ell+1$ and go to Step 2 for the next iteration until convergence.
\end{enumerate}
\end{algorithm}

\begin{theorem}\label{Theorem_Error_Estimate_1}
For $\ell\geq 1$, according to the eigenpair approximation $(\lambda_h^{(\ell)},u_h^{(\ell)})\in\mathcal R\times V_h$,
we define the spectral projectors $E_h^{(\ell)}: V\mapsto {\rm span}\{u_h^{(\ell)}\}$ as follows
\begin{eqnarray*}
a(E_h^{(\ell)}w, u_h^{(\ell)}) = a(w, u_h^{(\ell)}),\ \ \ \ {\rm for}\  w\in V.
\end{eqnarray*}
Then the eigenpair approximation $(\lambda_h^{(\ell+1)},u_h^{(\ell+1)})\in\mathcal R\times V_h$ produced by
Algorithm \ref{Algorithm_1} satisfies the following error estimates
\begin{eqnarray}
\|\bar u_h-E_h^{(\ell+1)}\bar u_h\|_a &\leq&\bar\lambda_h \sqrt{1+\frac{\eta_a^2(V_H)}{\lambda_1\delta_\lambda^2}}
\Big(1+\frac{1}{\lambda_1\delta_\lambda}\Big)\eta_a^2(V_H)\|\bar u_h-E_h^{(\ell)}\bar u_h\|_a,\label{Estimate_h_1_a}\\
\|\bar u_h-E_h^{(\ell+1)}\bar u_h\|_b&\leq& \Big(1+\frac{1}{\lambda_1\delta_\lambda}\Big)\eta_a(V_H)
\|\bar u_h-E_h^{(\ell+1)}\bar u_h\|_a.\label{Estimate_h_1_b}
\end{eqnarray}
\end{theorem}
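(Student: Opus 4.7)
The plan is to mirror the argument already carried out for Theorem \ref{Error_Estimate_Theorem_k}, specialized to the one-dimensional augmented direction and using Corollary \ref{Error_Estimate_Corollary} in place of Corollary \ref{Error_Estimate_Corollary_k}. First I would dispose of the base case $\ell=1$: since $V_H\subset V_{H,h}$ gives $\eta_a(V_{H,h})\le\eta_a(V_H)$, Corollary \ref{Error_Estimate_Corollary} applied on the subspace $V_{H,h}\subset V_h$ (with $V_h$ playing the role of the ambient space $V$) delivers both (\ref{Estimate_h_1_b}) and an initial $a$-norm bound by $\|(I-\mathcal P_{H,h})\bar u_h\|_a$. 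I would then proceed by induction on $\ell$, assuming (\ref{Estimate_h_1_a})--(\ref{Estimate_h_1_b}) at level $\ell-1$ and proving them at level $\ell$.

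The heart of the argument is a sharp bound on $\|\bar u_h-\mathcal P_{H,h}\bar u_h\|_a$. Let $\pi_h^{(\ell)}$ denote the $b(\cdot,\cdot)$-orthogonal projection onto $\mathrm{span}\{u_h^{(\ell)}\}$, so $\pi_h^{(\ell)}\bar u_h=q\,u_h^{(\ell)}$ for a single scalar $q$. Setting $c=\bar\lambda_h q/\lambda_h^{(\ell)}$, the element $c\widehat u_h^{(\ell+1)}$ lies in $V_{H,h}$, hence by the $a$-orthogonality of $\mathcal P_{H,h}$,
\begin{eqnarray*}
\|\bar u_h-\mathcal P_{H,h}\bar u_h\|_a^2
=a\bigl(\bar u_h-c\widehat u_h^{(\ell+1)},\bar u_h-\mathcal P_{H,h}\bar u_h\bigr).
\end{eqnarray*}
Using $a(\bar u_h,\cdot)=\bar\lambda_h b(\bar u_h,\cdot)$ on $V_h$ together with (\ref{Linear_Equation}), the right-hand side collapses to $\bar\lambda_h\,b\bigl(\bar u_h-\pi_h^{(\ell)}\bar u_h,\bar u_h-\mathcal P_{H,h}\bar u_h\bigr)$. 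Cauchy--Schwarz plus $\|\bar u_h-\pi_h^{(\ell)}\bar u_h\|_b\le\|\bar u_h-E_h^{(\ell)}\bar u_h\|_b$ (because $\pi_h^{(\ell)}$ is the $b$-optimal choice from $\mathrm{span}\{u_h^{(\ell)}\}$), followed by the Aubin--Nitsche-type estimate (\ref{Aubin_Nitsche_Estimate}) applied to $\mathcal P_{H,h}$ and the inductive $b$-norm bound (\ref{Estimate_h_1_b}) at level $\ell$, yield
\begin{eqnarray*}
\|\bar u_h-\mathcal P_{H,h}\bar u_h\|_a
\le \bar\lambda_h\Bigl(1+\tfrac{1}{\lambda_1\delta_\lambda}\Bigr)\eta_a^2(V_H)\,\|\bar u_h-E_h^{(\ell)}\bar u_h\|_a.
\end{eqnarray*}

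Finally, Corollary \ref{Error_Estimate_Corollary} applied to the augmented subspace $V_{H,h}$ gives $\|\bar u_h-E_h^{(\ell+1)}\bar u_h\|_a\le\sqrt{1+\eta_a^2(V_H)/(\lambda_1\delta_\lambda^2)}\,\|\bar u_h-\mathcal P_{H,h}\bar u_h\|_a$, and combining with the previous display produces (\ref{Estimate_h_1_a}); the companion $b$-norm bound (\ref{Estimate_h_1_b}) at level $\ell+1$ is just the second inequality of Corollary \ref{Error_Estimate_Corollary} on $V_{H,h}$, completing the induction.

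I expect the main obstacle to be the bookkeeping around the scalar $c$: choosing $c=\bar\lambda_h q/\lambda_h^{(\ell)}$ so that the $a$-inner product rewrites cleanly as $\bar\lambda_h\,b(\bar u_h-\pi_h^{(\ell)}\bar u_h,\cdot)$, and then recognizing that switching from the $a$-projection $E_h^{(\ell)}$ to the $b$-projection $\pi_h^{(\ell)}$ is permissible precisely because we control $\|\cdot\|_b$ afterwards. All remaining steps are direct applications of the previously proved lemmas and the inclusion $V_H\subset V_{H,h}$, which is what forces the factor $\eta_a^2(V_H)$ and yields the announced second-order gain.
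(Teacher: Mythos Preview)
Your proposal is correct and follows essentially the same route as the paper: base case via Corollary \ref{Error_Estimate_Corollary} on $V_{H,h}$ together with $\eta_a(V_{H,h})\le\eta_a(V_H)$; induction step by inserting $c\,\widehat u_h^{(\ell+1)}$ with $c=\bar\lambda_h q/\lambda_h^{(\ell)}$, converting the $a$-pairing to $\bar\lambda_h\,b(\bar u_h-\pi_h^{(\ell)}\bar u_h,\cdot)$ via (\ref{Linear_Equation}), replacing $\pi_h^{(\ell)}$ by $E_h^{(\ell)}$ in the $b$-norm, and finishing with (\ref{Aubin_Nitsche_Estimate}) and Corollary \ref{Error_Estimate_Corollary}. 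The only cosmetic point is the index labeling of the inductive $b$-bound (you invoke ``(\ref{Estimate_h_1_b}) at level $\ell$'' while your hypothesis was stated at level $\ell-1$); since (\ref{Estimate_h_1_b}) at level $\ell-1$ is exactly the bound on $\|\bar u_h-E_h^{(\ell)}\bar u_h\|_b$ that you need, this is harmless and matches the paper's own convention.
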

\begin{proof}
First, let us consider the error estimate for the initial approximations $u_h^{(1)}$.
From Corollary \ref{Error_Estimate_Corollary},  there exist exact eigenfunction $\bar u_h$ of (\ref{Weak_Eigenvalue_Discrete})  such that
the following error estimates hold for the eigenvector approximation $u_h^{(1)}$
\begin{eqnarray*}
\|\bar u_h- E_h^{(1)}\bar u_h\|_a
&\leq& \sqrt{1+\frac{\eta_a^2(V_{H,h})}{\lambda_1\delta_\lambda^2}}
\|(I-\mathcal P_{H,h})\bar u_h\|_a\nonumber\\
&\leq&  \sqrt{1+\frac{\eta_a^2(V_H)}{\lambda_1\delta_\lambda^2}}
\|(I-\mathcal P_{H,h})\bar u_h\|_a,
\end{eqnarray*}
and
\begin{eqnarray}\label{Inequality_131}
\|\bar u_h- E_h^{(1)}\bar u_h\|_b&\leq& \Big(1+\frac{1}{\lambda_1\delta_\lambda}\Big)\eta_a(V_{H,h})
\|\bar u_h- E_h^{(1)}\bar u_h\|_a\nonumber\\
&\leq&\Big(1+\frac{1}{\lambda_1\delta_\lambda}\Big)\eta_a(V_H)
\|\bar u_h- E_h^{(1)}\bar u_h\|_a,
\end{eqnarray}
where we have used the inequality $\eta_a(V_{H,h})\leq \eta_a(V_H)$ since $V_H\subset V_{H,h}$.

Then the result (\ref{Estimate_h_1_b}) holds  for $\ell=1$. Here the induction method is adopted to prove that
(\ref{Estimate_h_1_a}) and (\ref{Estimate_h_1_b})
hold for any $\ell\geq 1$. For this aim, we assume the estimates
(\ref{Estimate_h_1_a}) and (\ref{Estimate_h_1_b})
holds for $\ell-1$. Then let us prove that they also hold for $\ell$ based on this assumption.

We define the $b(\cdot,\cdot)$-orthogonal projection operator
$\pi_h^{(\ell)}$ to the space ${\rm span}\{u_h^{(\ell)}\}$.
Then there exists a real number $q\in\mathcal  R$
such that $\pi_h^{(\ell)}\bar u_h = q u_h^{(\ell)}$.
Then from  the orthogonal property of the projection operator $\mathcal P_{H,h}$, (\ref{Aubin_Nitsche_Estimate}),
(\ref{Inequality_13}), (\ref{Linear_Equation}) and
the induction assumption, the following inequalities hold
\begin{eqnarray}\label{Inequality_16_2}
&&\|\bar u_h - \mathcal P_{H,h}\bar u_h\|_a^2
= a(\bar u_h - \mathcal P_{H,h}\bar u_h, \bar u_h - \mathcal P_{H,h}\bar u_h)\nonumber\\
&&=a\left(\bar u_h -\frac{\bar\lambda_h}{\lambda_h^{(\ell)}}q\widehat u_h^{(\ell+1)}, \bar u_h - \mathcal P_{H,h}\bar u_h\right)\nonumber\\
&&=a\big(\bar u_h,  \bar u_h - \mathcal P_{H,h}\bar u_h\big) - \frac{\bar\lambda_h}{\lambda_h^{(\ell)}}q
a\big(\widehat u_h^{(\ell+1)}, \bar u_h - \mathcal P_{H,h}\bar u_h\big)\nonumber\\
&&=\bar\lambda_h b(\bar u_h,  \bar u_h - \mathcal P_{H,h}\bar u_h) - \bar\lambda_h
b(q u_h^{(\ell)}, \bar u_h - \mathcal P_{H,h}\bar u_h)\nonumber\\
&&=\bar\lambda_h b(\bar u_h - \pi_h^{(\ell)}\bar u_h, \bar u_h - \mathcal P_{H,h}\bar u_h)\nonumber\\
&&\leq \bar\lambda_h\|\bar u_h - \pi_h^{(\ell)}\bar u_h\|_b\|\bar u_h - \mathcal P_{H,h}\bar u_h\|_b
\leq \bar\lambda_h\|\bar u_h - E_h^{(\ell)}\bar u_h\|_b\|\bar u_h - \mathcal P_{H,h}\bar u_h\|_b\nonumber\\
&&\leq  \bar\lambda_h\Big(1+\frac{1}{\lambda_1\delta_\lambda}\Big) \eta_a(V_{H,h})\|\bar u_h- E_h^{(\ell)}\bar u_h\|_a
\eta_a(V_{H,h})\|\bar u_h - \mathcal P_{H,h}\bar u_h\|_a\nonumber\\
&&\leq  \bar\lambda_h \Big(1+\frac{1}{\lambda_1\delta_\lambda}\Big)
\eta_a^2(V_H)\|\bar u_h- E_h^{(\ell)}\bar u_h\|_a\|\bar u_h - \mathcal P_{H,h}\bar u_h\|_a,
\end{eqnarray}
where we also used the inequality $\eta_a(V_{H,h})\leq \eta_a(V_H)$ since $V_H\subset V_{H,h}$.

From (\ref{Inequality_16_2}), we have the following estimate
\begin{eqnarray}\label{Inequality_17_2}
\|\bar u_h - \mathcal P_{H,h}\bar u_h\|_a \leq \bar\lambda_h \Big(1+\frac{1}{\lambda_1\delta_\lambda}\Big)\eta_a^2(V_H)\|\bar u_h- E_h^{(\ell)}\bar u_h\|_a.
\end{eqnarray}
Combining Lemma \ref{Error_Estimate_Theorem_Old}, Corollary \ref{Error_Estimate_Corollary}
and (\ref{Inequality_17_2}), we have the following estimate
\begin{eqnarray}\label{Inequality_18}
\|\bar u_h-E_h^{(\ell+1)}\bar u_h\|_a \leq \bar\lambda_h
\sqrt{1+\frac{\eta_a^2(V_H)}{\lambda_1\delta_\lambda^2}}\Big(1+\frac{1}{\lambda_1\delta_\lambda}\Big) \eta_a^2(V_H)\|\bar u_h-u_h^{(\ell)}\|_a.
\end{eqnarray}
Similarly to the proof of Lemma \ref{Error_Estimate_Theorem_Old}, the following $\|\cdot\|_b$-error estimate hold
\begin{eqnarray}\label{Inequality_19_2}
&&\|\bar u_h- E_h^{(\ell+1)}\bar u_h\|_b\leq \Big(1+\frac{1}{\lambda_1\delta_\lambda}\Big) \eta_a(V_H)\|\bar u_h- E_h^{(\ell+1)}\bar u_h\|_a.
\end{eqnarray}
From (\ref{Inequality_18}) and (\ref{Inequality_19_2}), we know that the estimates (\ref{Estimate_h_1_a})
and (\ref{Estimate_h_1_b}) also holds for $\ell$. Then the proof is complete.
\end{proof}
\begin{corollary}
Under the conditions of Theorem \ref{Theorem_Error_Estimate_1}, the eigenfunction approximation $u_h^{(\ell+1)}$ has following error estimates
\begin{eqnarray}
\|\bar u_h-E_h^{(\ell+1)}\bar u_h\|_a &\leq&\big(\gamma(\bar\lambda_h)\big)^\ell\  \|\bar u_h-E_h^{(1)}\bar u_h\|_a,\label{Estimate_h_1_a_ell}\\
\|\bar u_h-E_h^{(\ell+1)}\bar u_h\|_b&\leq& \Big(1+\frac{1}{\lambda_1\delta_\lambda}\Big)\eta_a(V_H)
\|\bar u_h-E_h^{(\ell+1)}\bar u_h\|_a,\label{Estimate_h_1_b_ell}
\end{eqnarray}
where
\begin{eqnarray}\label{Definition_Gamma}
\gamma(\bar\lambda_h) = \bar\lambda_h \sqrt{1+\frac{\eta_a^2(V_H)}{\lambda_1\delta_\lambda^2}}
\Big(1+\frac{1}{\lambda_1\delta_\lambda}\Big)\eta_a^2(V_H).
\end{eqnarray}
\end{corollary}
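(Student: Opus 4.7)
My plan is to derive the Corollary directly from Theorem \ref{Theorem_Error_Estimate_1}, which already establishes the one-step contraction. The first inequality (\ref{Estimate_h_1_b_ell}) is nothing but (\ref{Estimate_h_1_b}) applied to the output at iteration $\ell+1$, so there is nothing new to prove for the $\|\cdot\|_b$-estimate; I would state it as an immediate consequence. The real content is the $a$-norm bound (\ref{Estimate_h_1_a_ell}), and I would prove it by induction on $\ell$.

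For the induction, I would first read off from (\ref{Estimate_h_1_a}) that
\begin{equation*}
\|\bar u_h-E_h^{(\ell+1)}\bar u_h\|_a \le \gamma(\bar\lambda_h)\,\|\bar u_h-E_h^{(\ell)}\bar u_h\|_a,
\end{equation*}
where $\gamma(\bar\lambda_h)$ is precisely the quantity defined in (\ref{Definition_Gamma}). The base case $\ell=1$ is then exactly (\ref{Estimate_h_1_a}) with the role of $E_h^{(\ell)}$ played by $E_h^{(1)}$. For the inductive step, assuming $\|\bar u_h-E_h^{(\ell)}\bar u_h\|_a \le (\gamma(\bar\lambda_h))^{\ell-1}\|\bar u_h-E_h^{(1)}\bar u_h\|_a$, I would apply the one-step estimate once more and multiply the two bounds to obtain the factor $(\gamma(\bar\lambda_h))^\ell$. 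This gives (\ref{Estimate_h_1_a_ell}).

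There is essentially no obstacle here: the Corollary is a routine consequence of recognizing the one-step estimate in Theorem \ref{Theorem_Error_Estimate_1} as a contraction and iterating it. The only point worth being careful about is that the contraction constant $\gamma(\bar\lambda_h)$ does not depend on the iteration index $\ell$; this is true because it involves only $\bar\lambda_h$, $\lambda_1$, $\delta_\lambda$, and $\eta_a(V_H)$, all of which are fixed quantities associated with the coarse space $V_H$ and the exact discrete eigenpair, not with the current iterate. Consequently, the product of $\ell$ copies of $\gamma(\bar\lambda_h)$ gives the claimed $(\gamma(\bar\lambda_h))^\ell$ factor, and (\ref{Estimate_h_1_b_ell}) follows from (\ref{Estimate_h_1_b}) in a single line. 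Implicit here is the mild assumption, already made in the paper, that $V_H$ is sufficiently fine so that $\gamma(\bar\lambda_h)<1$, which is what makes the iteration a genuine contraction.
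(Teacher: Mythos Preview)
Your proposal is correct and matches the paper's approach: the paper states this Corollary without proof, treating it as an immediate consequence of iterating the one-step contraction (\ref{Estimate_h_1_a}) from Theorem~\ref{Theorem_Error_Estimate_1} and restating (\ref{Estimate_h_1_b}). Your induction argument and your observation that $\gamma(\bar\lambda_h)$ is independent of $\ell$ are exactly what is needed.
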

The error estimate for the eigenvalue approximations $\lambda_h^{(\ell)}$ can be deduced from
Theorem \ref{Theorem_Error_Estimate_1}  and Remark \ref{Remark_Eigenvalue}.

\section{The application to second order elliptic eigenvalue problem}
In this section, we will show the applications of augmented subspace
methods to the second order elliptic eigenvalue problem. These numerical schemes can improve
the efficiency for solving the eigenvalue problems. Especially, based on the property of the
augmented subspace method, the choice of the coarse finite element space $V_H$ is independent of the
finest finite element space.

Here, we are concerned with the second order elliptic eigenvalue problem, i.e., in (\ref{weak_eigenvalue_problem}),
the bilinear forms $a(\cdot, \cdot)$ and $b(\cdot,\cdot)$ are defined as follows
\begin{eqnarray*}
a(u,v)=\int_{\Omega}\nabla u\cdot\mathcal{A}\nabla vd\Omega, &&
b(u,v)=\int_{\Omega}\rho uvd\Omega,
\end{eqnarray*}
where $\Omega\subset\mathcal{R}^d \ (d=2,3)$ is a bounded domain,
$\mathcal{A}\in \big(W^{1,\infty}(\Omega)\big)^{d\times d}$  a
uniformly positive definite matrix on $\Omega$ and $\rho\in
W^{0,\infty}(\Omega)$ is a uniformly positive function on $\Omega$.
We pose homogeneous Dirichlet boundary condition to the problem and it means
here $V=H_0^1(\Omega)$ and $W=L^2(\Omega)$ (cf. \cite{Adams}).
In order to use the finite element discretization method, we employ the meshes defined
in section 2.

Here the augmented subspace methods defined by Algorithms \ref{Algorithm_k} and \ref{Algorithm_1}
are applied to the second order elliptic eigenvalue problem.
The main ingredient is to discuss the way to construct the coarse coarse space $V_H$
based on the fine space $V_h$. There have two obvious ways to produce the coarse space $V_H$.
In the first way, the coarse space $V_H$ and fine space $V_h$ are defined on the same mesh
denoted by $\mathcal T_H$ in this section.
But the degree of the fine space $\mathcal T_h$ is higher than that of the coarse space $V_H$.
This means the coarse space $V_H$ is chosen as the linear finite element space.
The second way to produce the coarse space is based on the two-grid idea from \cite{XuZhou}.
In this way, the coarse space $V_H$ is defined on the coarse grid $\mathcal T_H$ but
the fine space $V_h$ is defined on the finer grid $\mathcal T_h$.
In these two ways, the coarse space $V_H$ are both chosen as the linear finite element space on the
mesh $\mathcal T_H$,  we have the following estimate for the quantity $\eta_a(V_H)$ (cf. \cite{BrennerScott,Ciarlet})
\begin{eqnarray}
\eta_a(V_H)\leq CH,
\end{eqnarray}
where the constant depends on the matrix $\mathcal A$, scalar $\rho$ and the
shape of the mesh $\mathcal T_H$.

Based on Theorems \ref{Theorem_Error_Estimate_k} and \ref{Theorem_Error_Estimate_1},
the convergence result can be concluded with the following inequalities
\begin{eqnarray}
&&\big\|\bar u_{i,h} -E_{k,h}^{(\ell+1)}u_{i,h} \big\|_a \leq C\big(CH\big)^{2\ell}\big\|\bar u_{i,h} - E_{k,h}^{(1)}u_{i,h} \big\|_a,\label{Test_1_1}\\
&&\|\bar u_{i,h} -E_{k,h}^{(\ell+1)}\bar u_{i,h} \|_b\leq CH \|\bar u_{i,h} -E_{k,h}^{(\ell+1)}\bar u_{i,h}\|_a,\label{Test_1_0}
\end{eqnarray}
and
\begin{eqnarray}
\|\bar u_h-E_h^{(\ell+1)}\bar u_h\|_a &\leq&\big(CH\big)^{2\ell}\  \|\bar u_h-E_h^{(1)}\bar u_h\|_a,\label{Test_2_1}\\
\|\bar u_h-E_h^{(\ell+1)}\bar u_h\|_b&\leq& CH\big\|\bar u_h-E_h^{(\ell+1)}\bar u_h\|_a.\label{Test_2_0}
\end{eqnarray}
The aim of this section is to check these convergence results by some numerical examples.
In these numerical experiments, Algorithms \ref{Algorithm_k} and \ref{Algorithm_1} are implemented
for solving the following standard Laplace eigenvalue problem: Find $(\lambda,u)\in\mathcal R\times H_0^1(\Omega)$ such that
\begin{eqnarray}\label{Test_Problem}
\left\{
\begin{array}{rcl}
-\Delta u&=& \lambda u,\ \ \ {\rm in}\ \Omega,\\
u&=&0,\ \ \ \ {\rm on}\ \partial\Omega,\\
\|u\|_1^2&=&1,
\end{array}
\right.
\end{eqnarray}
where the computing domain is set to be the unit square $\Omega=(0,1)\times (0,1)$.

In all numerical testes, the initial eigenfunction approximation is produced by solving
 the eigenvalue problem (\ref{Test_Problem}) on the coarse space $V_H$.
 The exact finite element eigenfunction $\bar u_h$ is obtained by solving the eigenvalue
problem directly on the fine space $V_h$.

\subsection{Augmented subspace by low order finite element space}
In the first subsection, we check the convergence results (\ref{Test_1_1})-(\ref{Test_2_0})
for the fine space is chosen as the high order finite element space.
In these tests, the initial eigenfunction approximation is produced by solving the eigenvalue
problems on the coarse space $V_H$. Then we do the iteration steps by the augmented subspace
method defined by Algorithms \ref{Algorithm_k} and \ref{Algorithm_1}.

In the first way,  the spaces $V_H$ and $V_h$ are defined on the same mesh $\mathcal{T}_H$
but with different order of finite element methods. Here, $V_H$
is chosen as the linear finite element space and the fine mesh $V_h$ is $4$-th order finite element space
defined on the mesh $\mathcal T_H$.

In order to validate the convergence results stated in (\ref{Test_1_1})-(\ref{Test_2_0}),
we check the numerical errors corresponding to the linear finite element space $V_H$
with different sizes $H$.  The aim here is to check the dependence of the convergence rate
on the mesh size $H$. The coarse mesh $\mathcal T_H$ is set to be the regular type of uniform mesh.
Figure \ref{Result_Low_Order} shows the corresponding convergence behaviors for the first eigenfunction
by Algorithm \ref{Algorithm_k} (or Algorithm \ref{Algorithm_1}) with the coarse space being the linear finite
element space on the mesh with size $H=\sqrt{2}/8$, $\sqrt{2}/16$, $\sqrt{2}/32$ and $\sqrt{2}/64$.
We can find the corresponding convergence rate are $0.044633$, $0.012493$,
$0.0032218$ and $0.00081231$. These results show that the augmented subspace method defined by
Algorithms \ref{Algorithm_k} and \ref{Algorithm_1} should have second order convergence which
validates the results (\ref{Test_1_1})-(\ref{Test_2_0}).
\begin{figure}[http!]
\centering
\includegraphics[width=6cm,height=4.5cm]{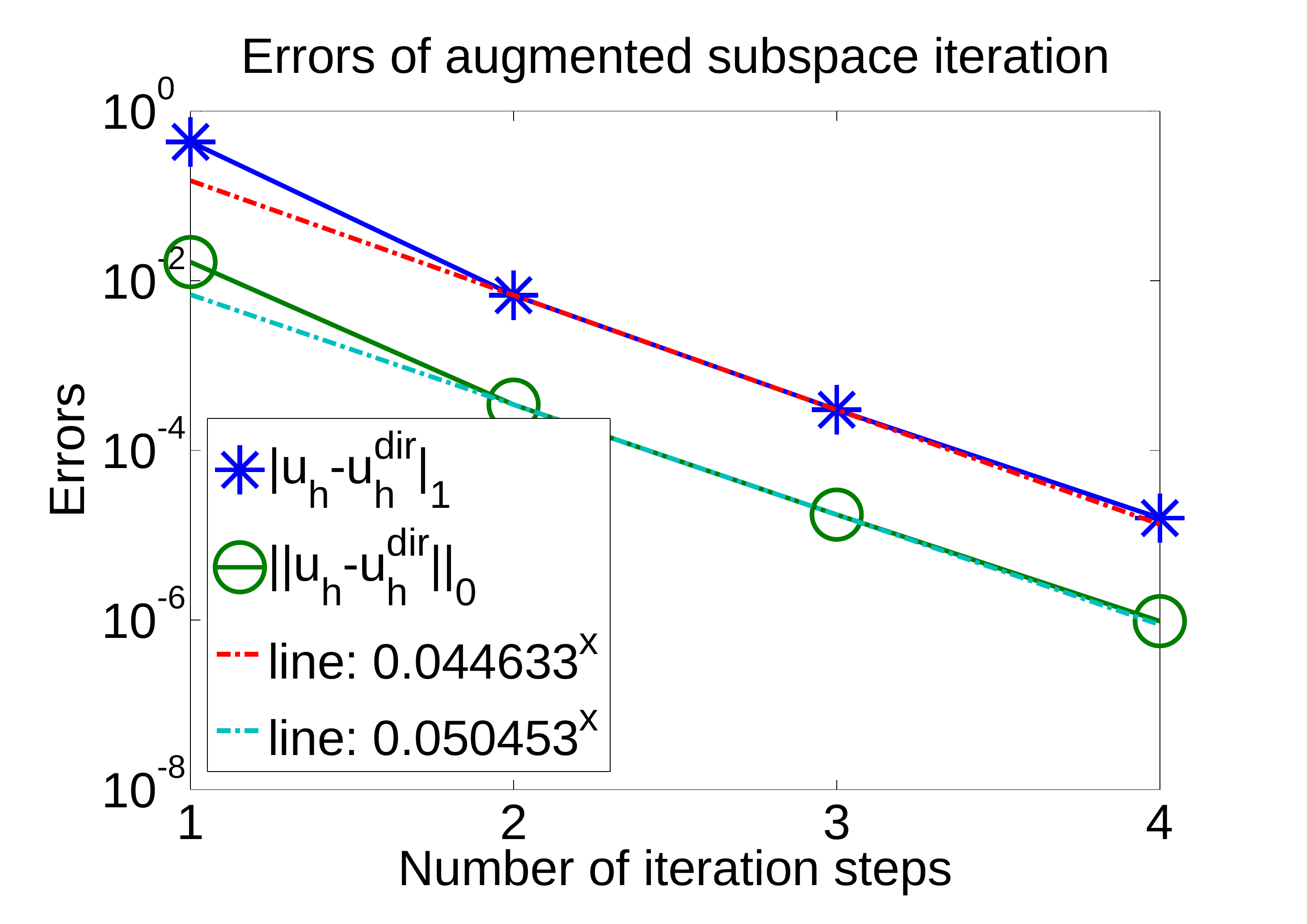}
\includegraphics[width=6cm,height=4.5cm]{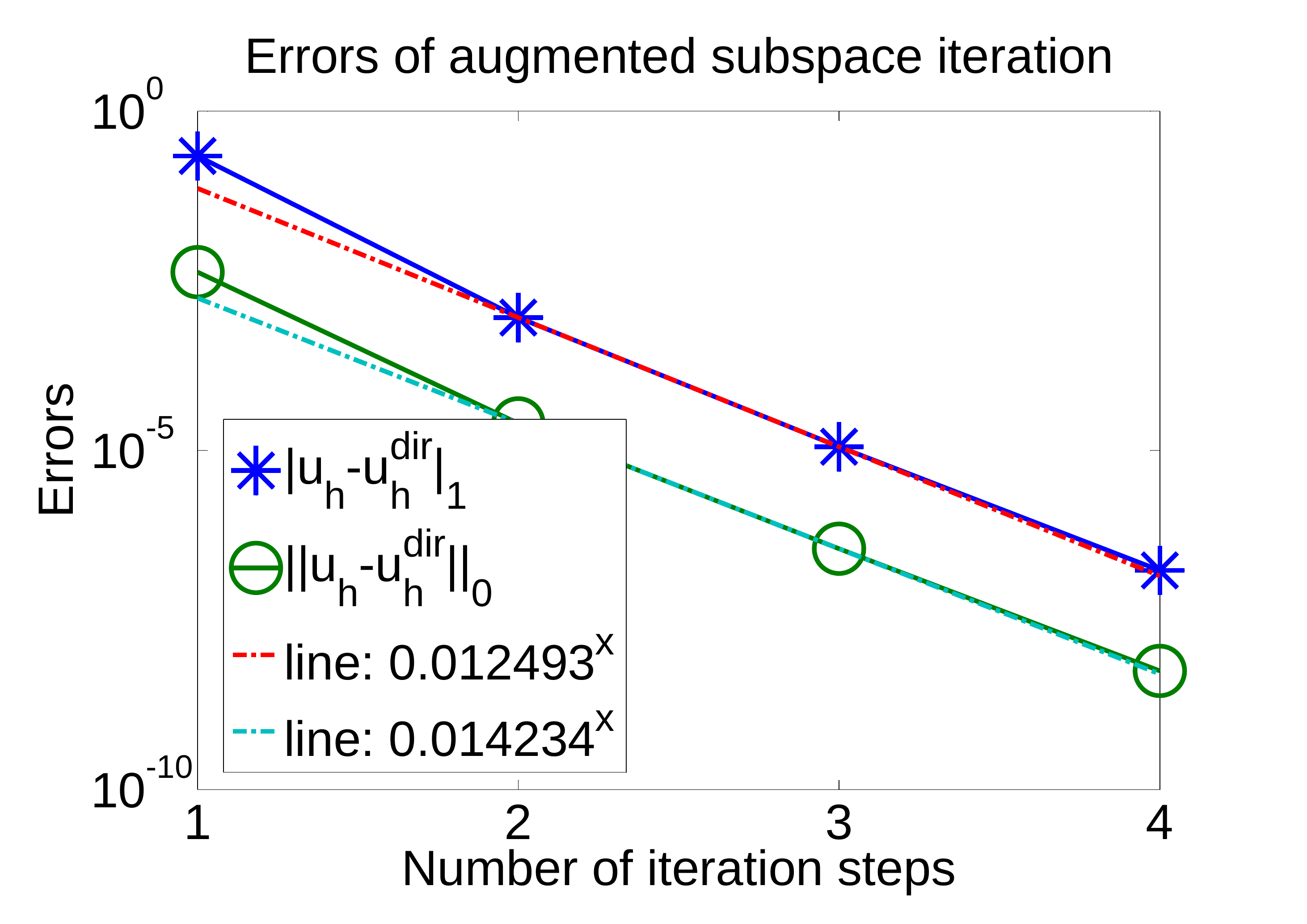}\\
\includegraphics[width=6cm,height=4.5cm]{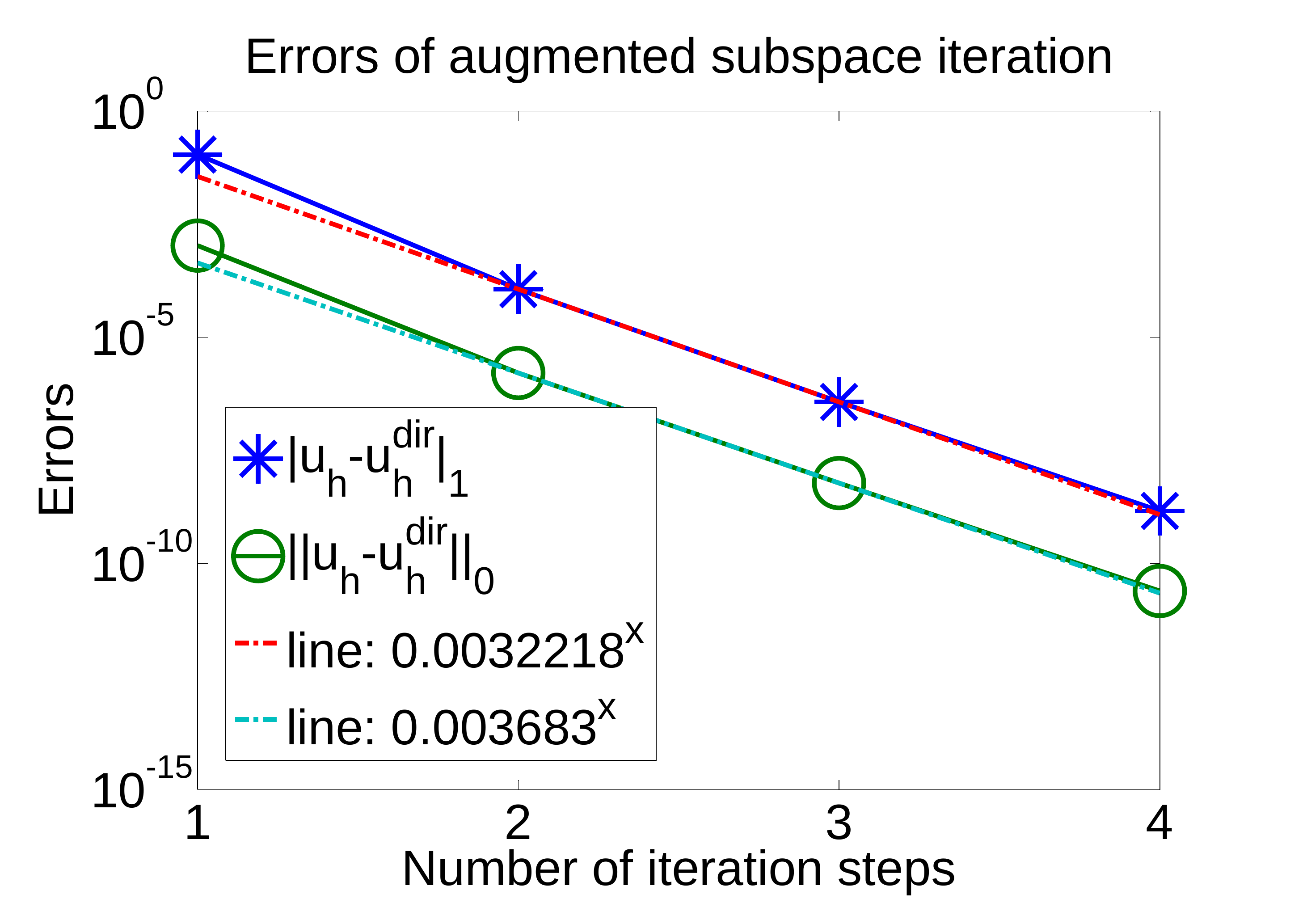}
\includegraphics[width=6cm,height=4.5cm]{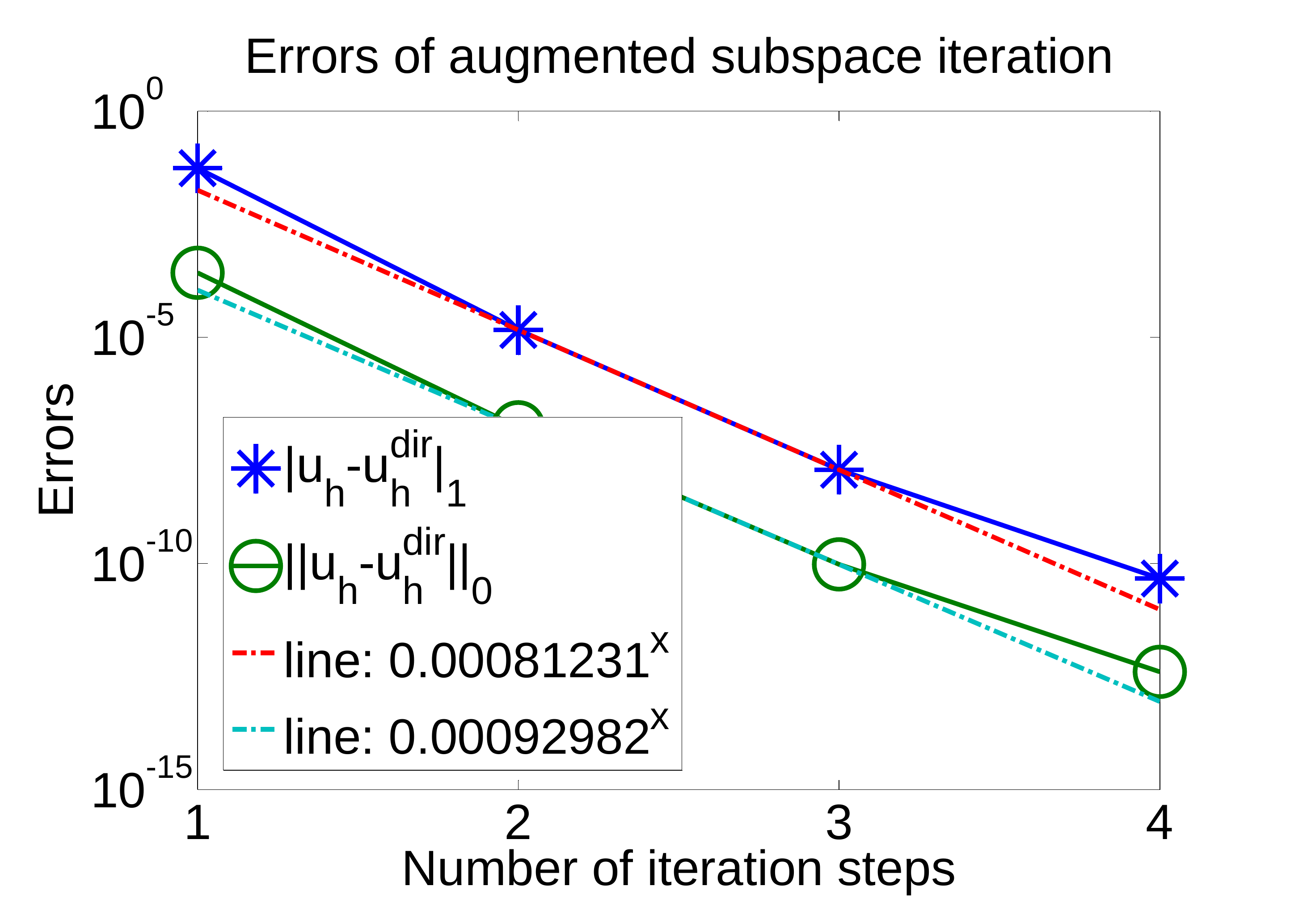}
\caption{The convergence behaviors for the first eigenfunction by Algorithm \ref{Algorithm_k}
with the coarse space being the linear finite element space on the mesh with size $H=\sqrt{2}/8$, $\sqrt{2}/16$, $\sqrt{2}/32$ and $\sqrt{2}/64$.
The corresponding convergence rates are $0.044633$, $0.012493$, $0.0032218$ and $0.00081231$.}\label{Result_Low_Order}
\end{figure}

Here, we also check the performance of Algorithm \ref{Algorithm_k} for computing the smallest $4$ eigenpairs.
Figure \ref{Result_Low_Order_4} shows the corresponding convergence behaviors  for the smallest $4$ eigenfunctions
by Algorithm \ref{Algorithm_k} with the coarse space being the linear finite element space on the mesh with size $H=\sqrt{2}/8$, $\sqrt{2}/16$, $\sqrt{2}/32$ and $\sqrt{2}/64$.
We can find the corresponding convergence rate are $0.35452$, $0.12177$, $0.032864$ and $0.007999$.
Furthermore, from Figures \ref{Result_Low_Order} and \ref{Result_Low_Order_4}, we can find
the convergence rate for the $4$-th eigenfucntion is slower than that for the $1$-st eigenfunction which
is consistent with Theorem \ref{Algorithm_k}.
\begin{figure}[http!]
\centering
\includegraphics[width=6cm,height=4.5cm]{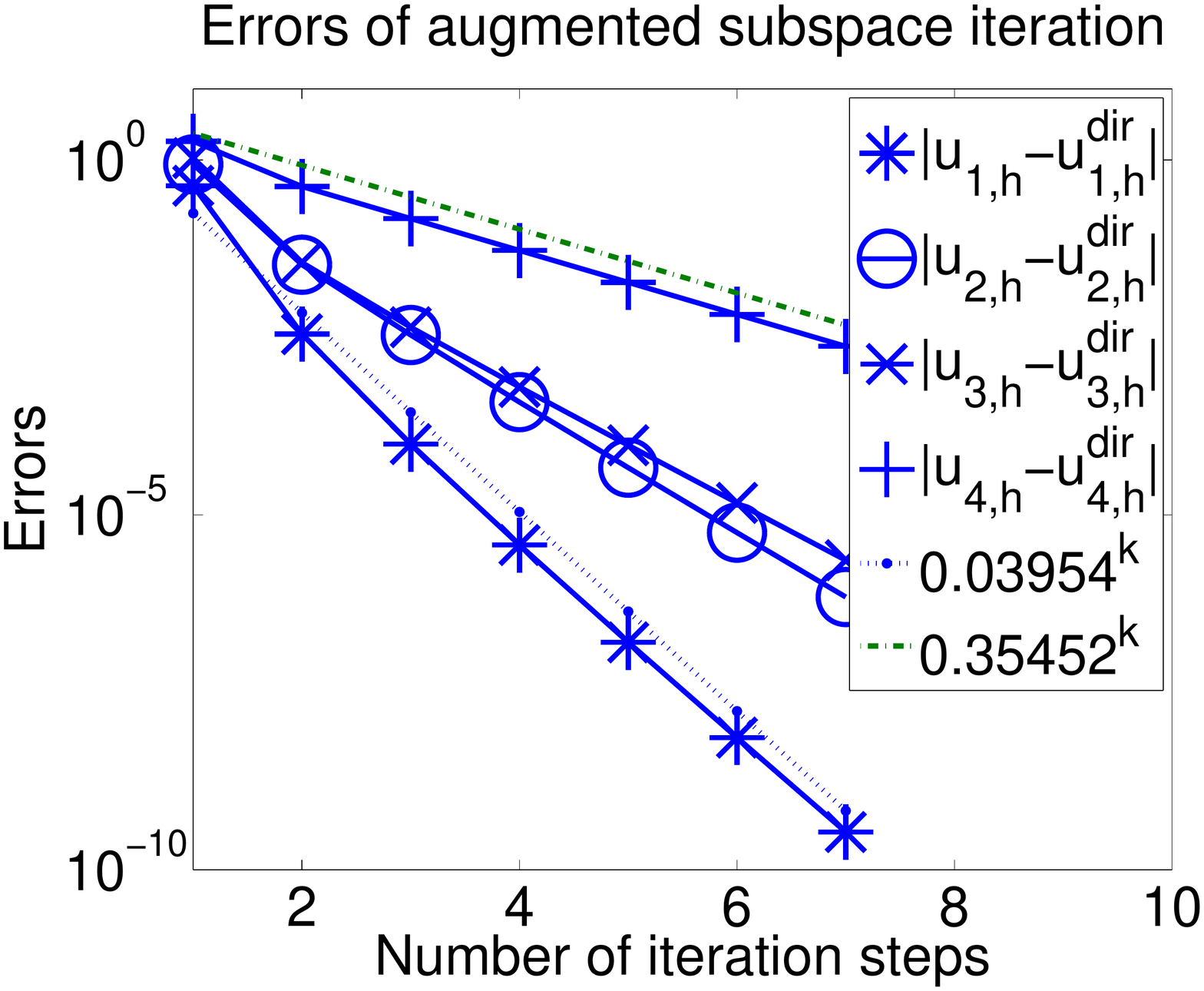}
\includegraphics[width=6cm,height=4.5cm]{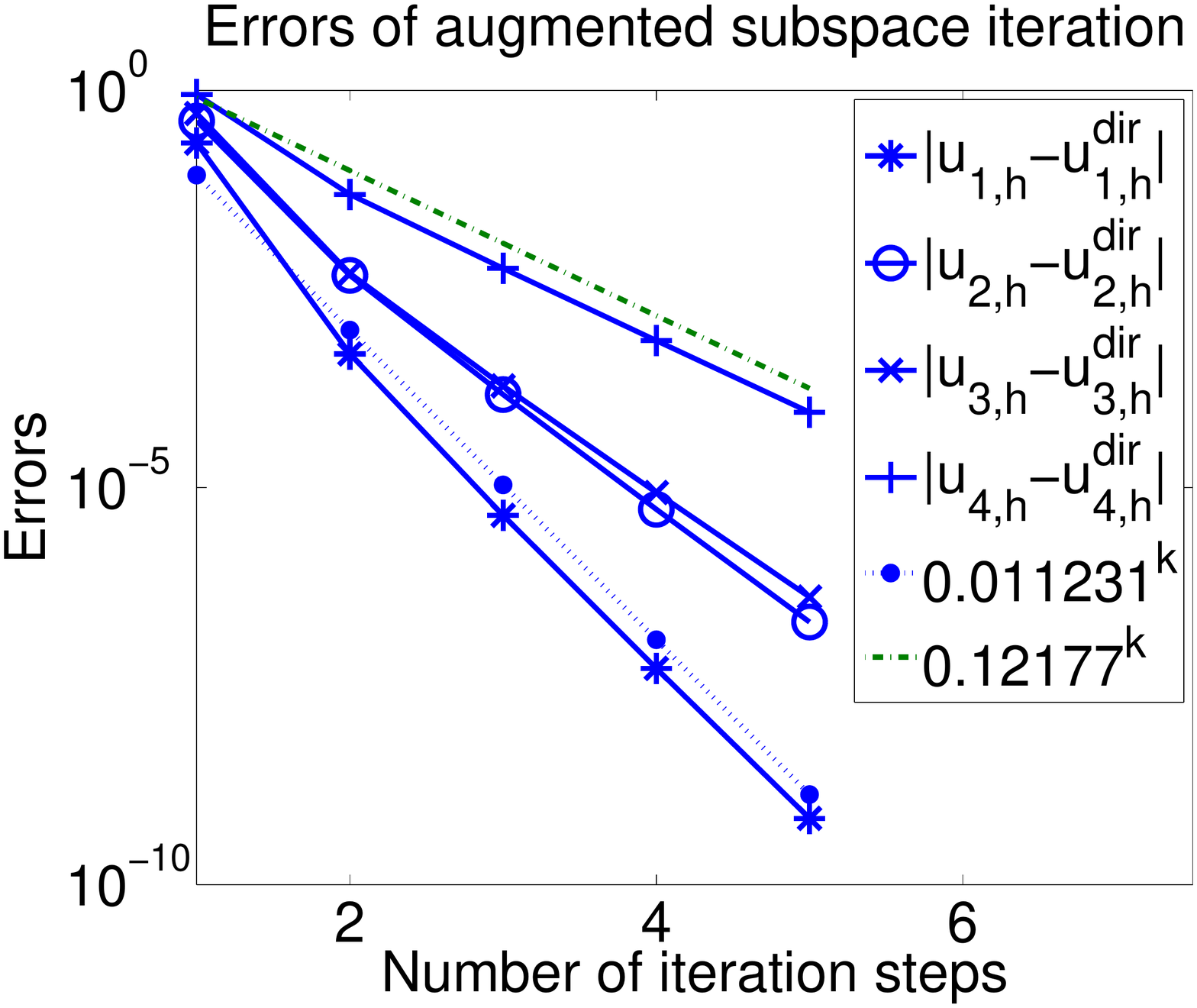}\\
\includegraphics[width=6cm,height=4.5cm]{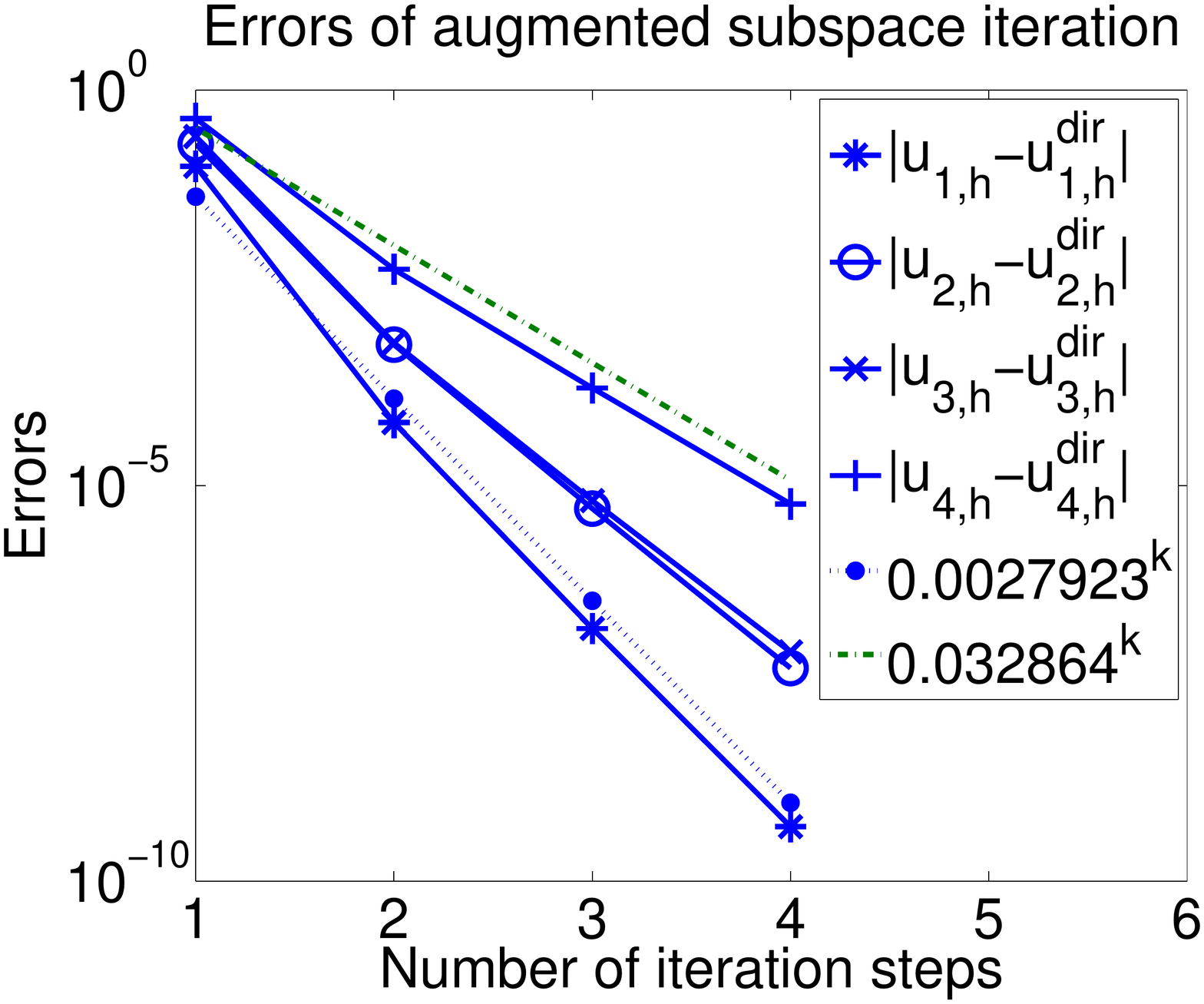}
\includegraphics[width=6cm,height=4.5cm]{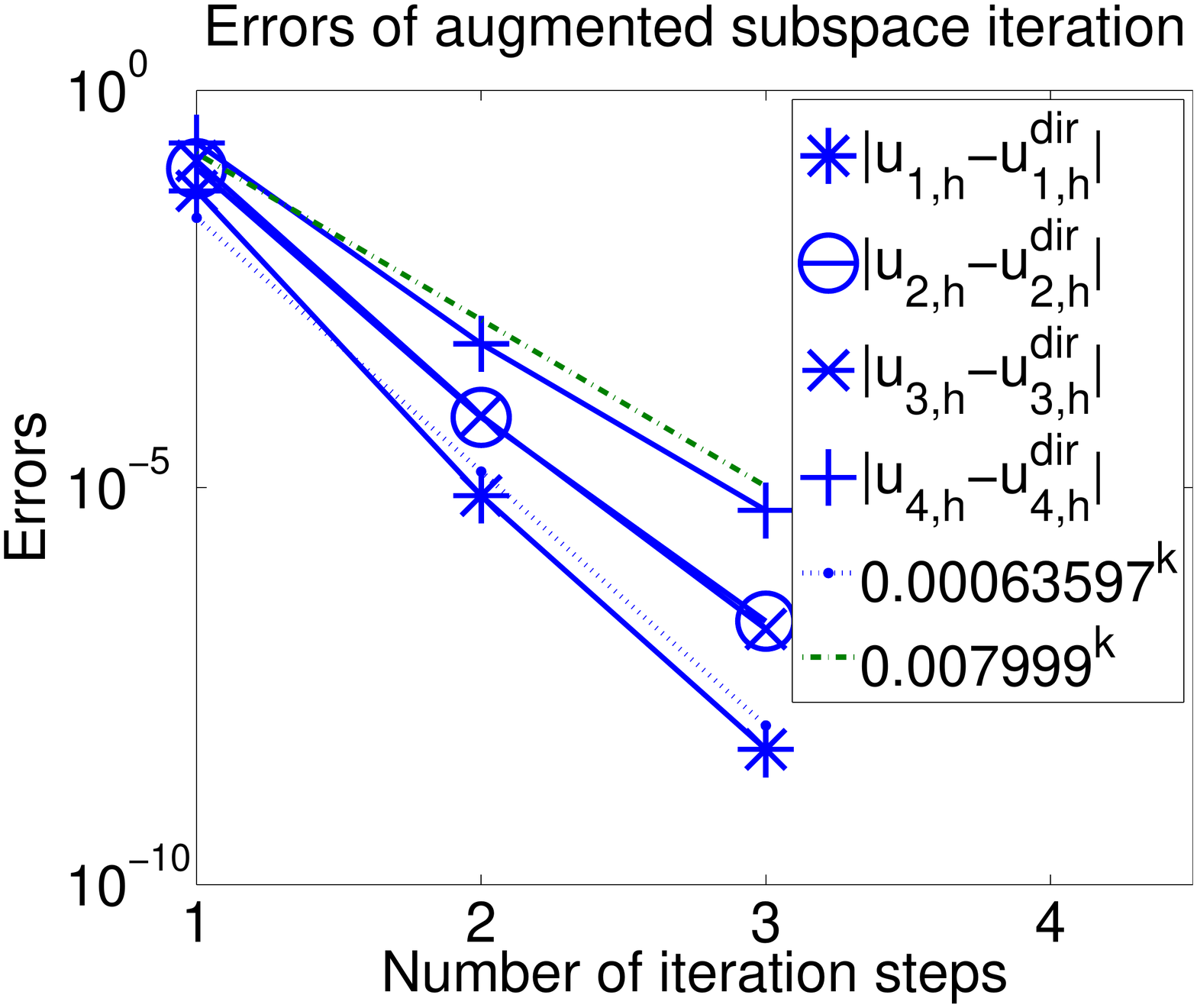}
\caption{The convergence behaviors for the smallest $4$ eigenfunction by Algorithm \ref{Algorithm_k}
with the coarse space being the linear finite element space on the mesh with size $H=\sqrt{2}/8$, $\sqrt{2}/16$, $\sqrt{2}/32$ and $\sqrt{2}/64$.
The corresponding convergence rates are $0.35452$, $0.12177$, $0.032864$ and $0.007999$.}\label{Result_Low_Order_4}
\end{figure}

The next task is to check the performance of Algorithm \ref{Algorithm_1} for computing the only $4$-th eigenpair.
Figure \ref{Result_Low_Order_4_Only} shows the corresponding convergence behaviors  for the only $4$-th eigenfunctions
by Algorithm \ref{Algorithm_1} with the coarse space being the linear finite element space on
the mesh with size $H=\sqrt{2}/8$, $\sqrt{2}/16$, $\sqrt{2}/32$ and $\sqrt{2}/64$. The corresponding convergence rate shown in
Figure \ref{Result_Low_Order_4_Only} are $0.35918$, $0.12588$, $0.035169$ and $0.0090917$.
These results show that the augmented subspace method defined by
Algorithm \ref{Algorithm_1} has second order convergence which validate the results (\ref{Test_2_1})-(\ref{Test_2_0}).
\begin{figure}[http!]
\centering
\includegraphics[width=6cm,height=4.5cm]{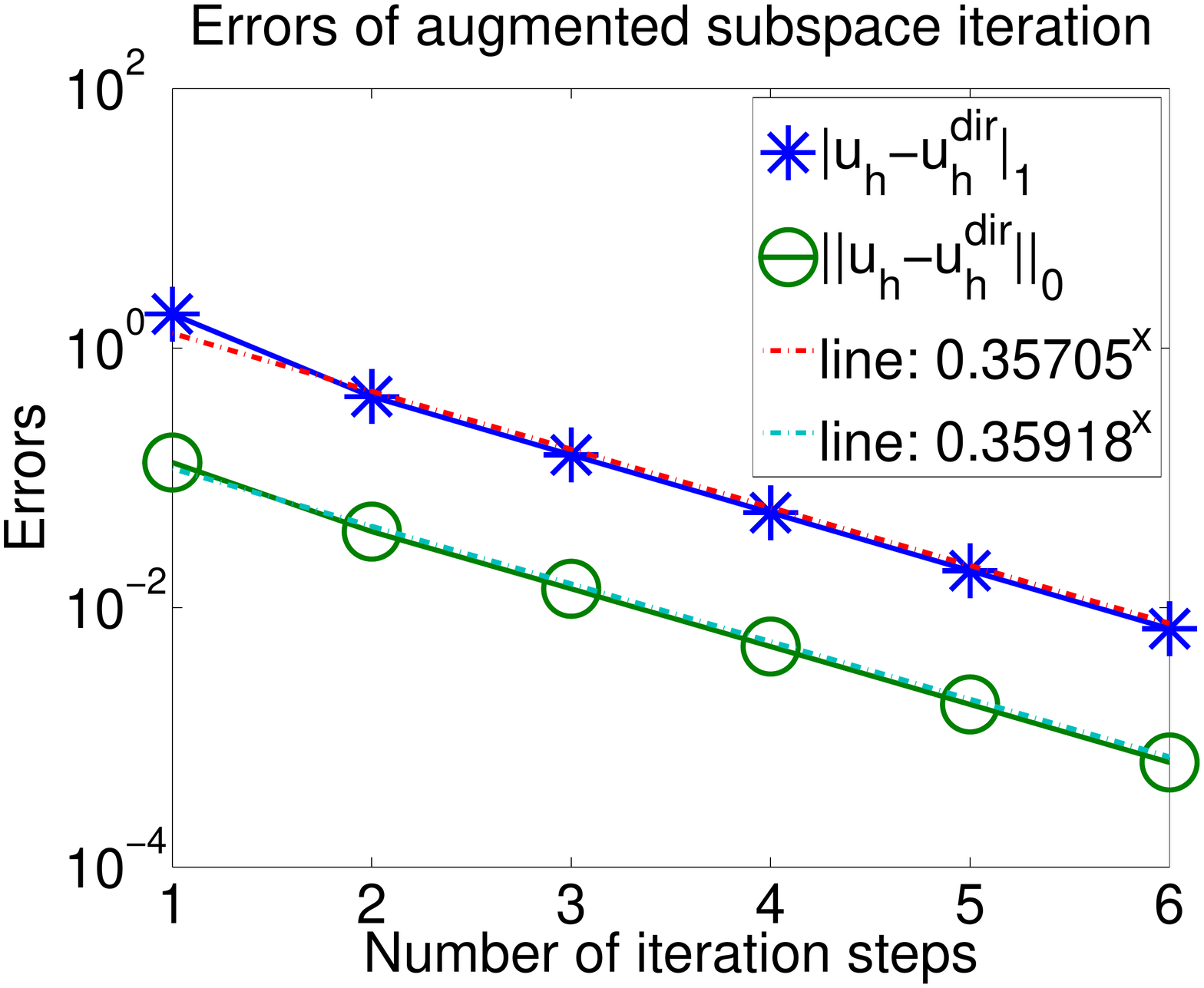}
\includegraphics[width=6cm,height=4.5cm]{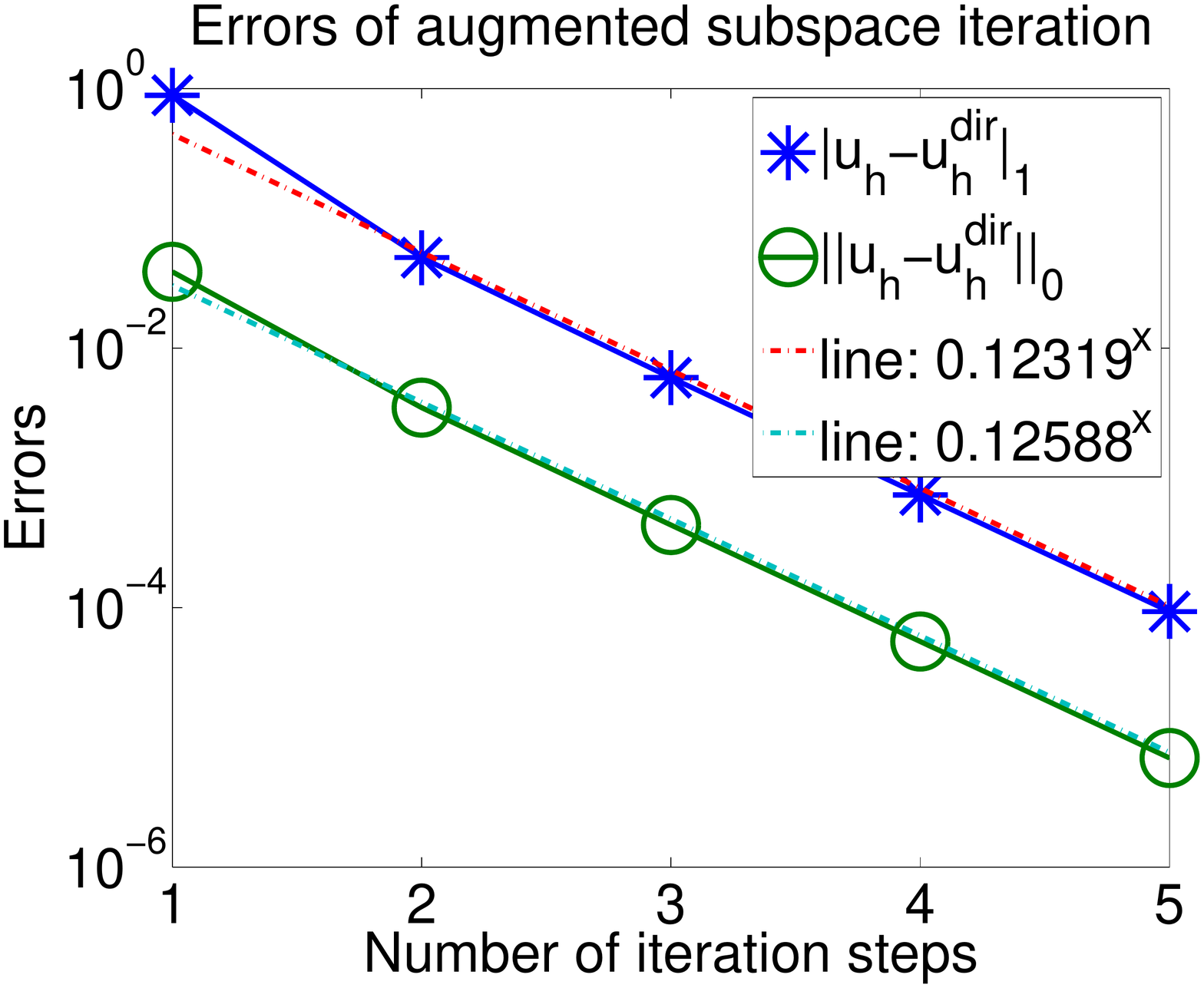}\\
\includegraphics[width=6cm,height=4.5cm]{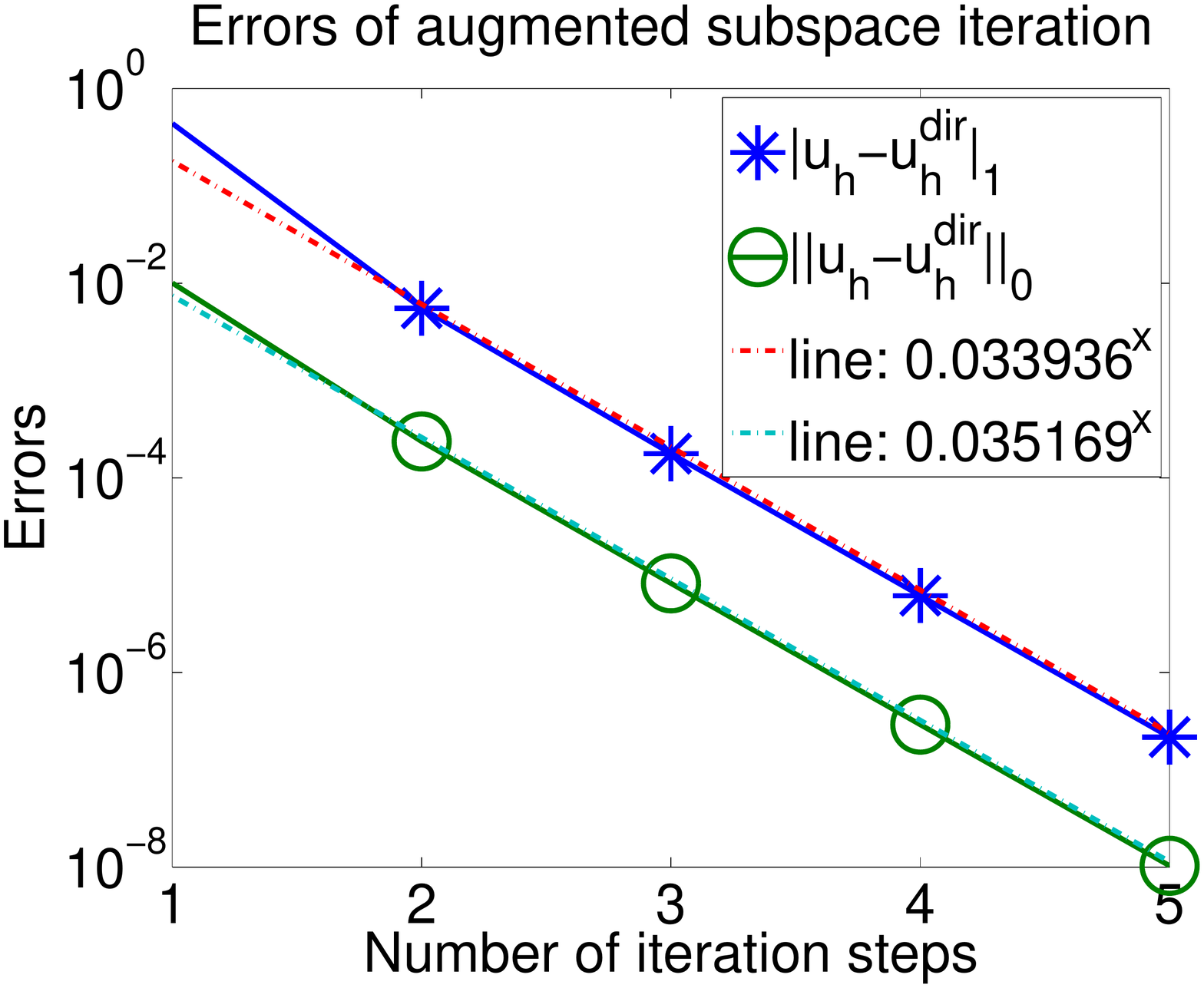}
\includegraphics[width=6cm,height=4.5cm]{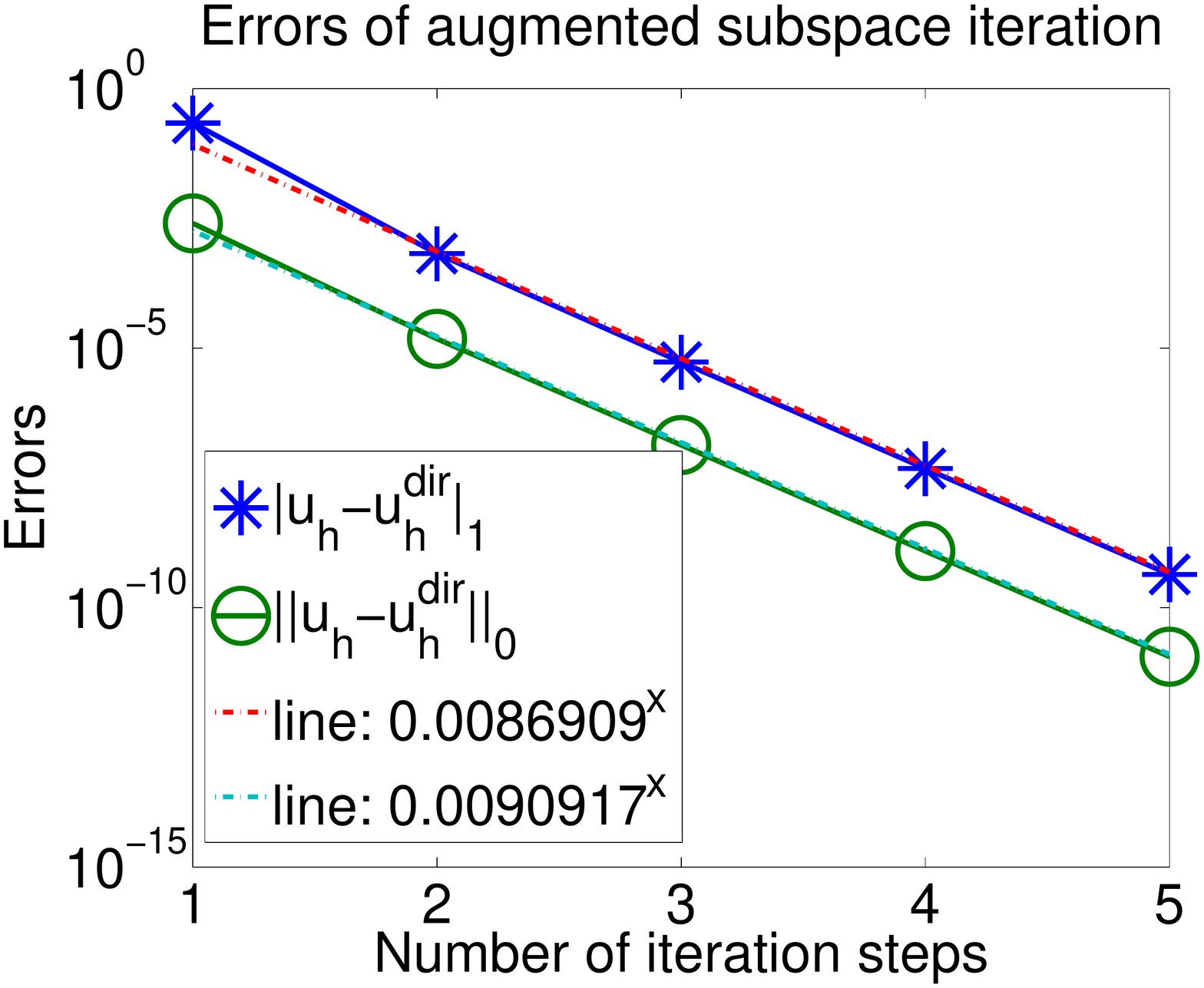}
\caption{The convergence behaviors for the only $4$-th eigenfunction by Algorithm \ref{Algorithm_1}
with the coarse space being the linear finite element space on the mesh with size $H=\sqrt{2}/8$, $\sqrt{2}/16$, $\sqrt{2}/32$ and $\sqrt{2}/64$.
The corresponding convergence rates are $0.35918$, $0.12588$, $0.035169$ and $0.0090917$.}\label{Result_Low_Order_4_Only}
\end{figure}

\subsection{Augmented subspace by the finite element space on the coarse mesh}
In the second subsection, $V_h$ is chosen as the linear finite element space defined on the finer mesh $\mathcal T_h$.
For this aim, we start from the coarse mesh $\mathcal T_H$ to produce the finer mesh
by the regular refinement. In the numerical tests here, we set the size $h=1/256$ for the finer mesh $\mathcal T_h$.
Here, $V_H$ is  chosen as the linear finite element space defined on the coarse mesh $\mathcal T_H$.
The initial eigenfunction approximation is also produced by solving the eigenvalue
problems on the coarse space $V_H$. Then we do the iteration steps by the augmented subspace
method defined by Algorithms \ref{Algorithm_k} and \ref{Algorithm_1}.

In order to validate the convergence results stated in (\ref{Test_1_1})-(\ref{Test_2_0}),
we  also check the numerical errors corresponding to the linear finite element space $V_H$
with different sizes $H$.  The aim is to check the dependence of the convergence rate
on the mesh size $H$. Here, the coarse mesh $\mathcal T_H$ is also set to be the regular type of uniform mesh.

Figure \ref{Result_Coarse_Mesh} shows the convergence behaviors for the first eigenfunction by
the augmented subspace methods corresponding to the coarse mesh sizes $H=\sqrt{2}/4$, $\sqrt{2}/8$, $\sqrt{2}/16$ and $\sqrt{2}/32$.
The corresponding convergence rates are $0.13142$, $0.048523$, $0.013652$ and $0.0035056$.
These results show that the augmented subspace method defined by Algorithms \ref{Algorithm_k} and \ref{Algorithm_1}
should have second order convergence
which also validates the results (\ref{Test_1_1})-(\ref{Test_1_0}).
\begin{figure}[http!]
\centering
\includegraphics[width=6cm,height=4.5cm]{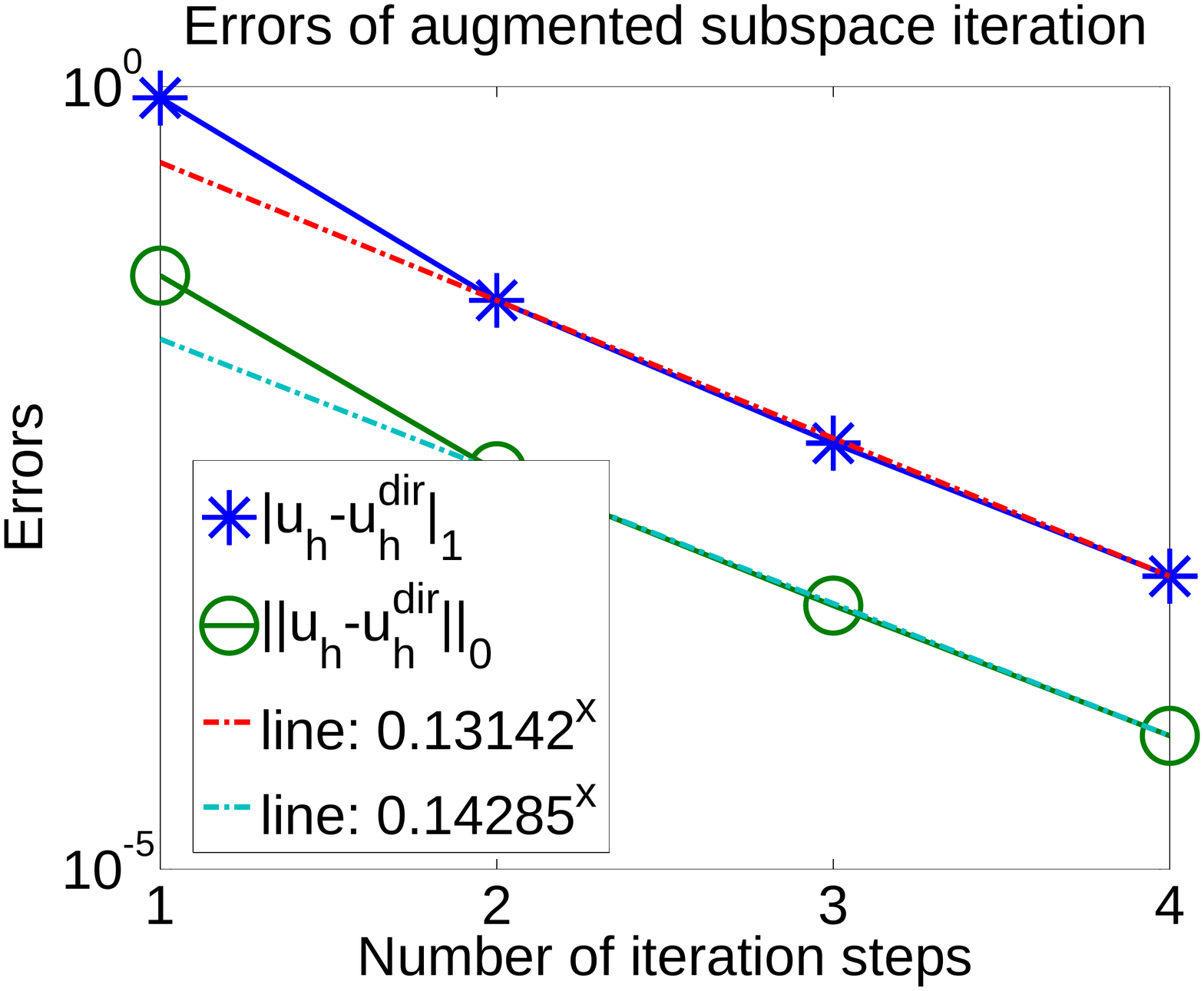}
\includegraphics[width=6cm,height=4.5cm]{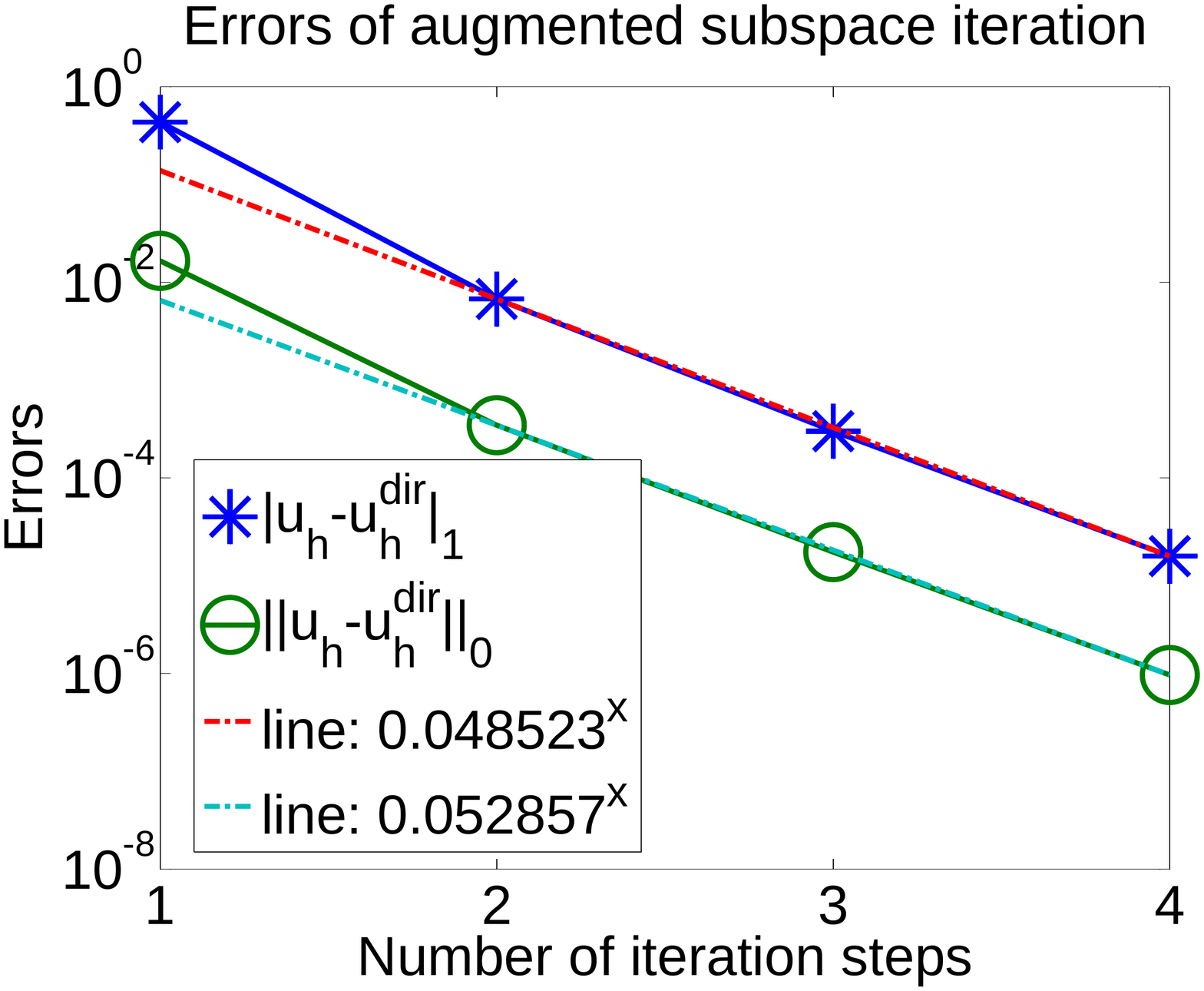}\\
\includegraphics[width=6cm,height=4.5cm]{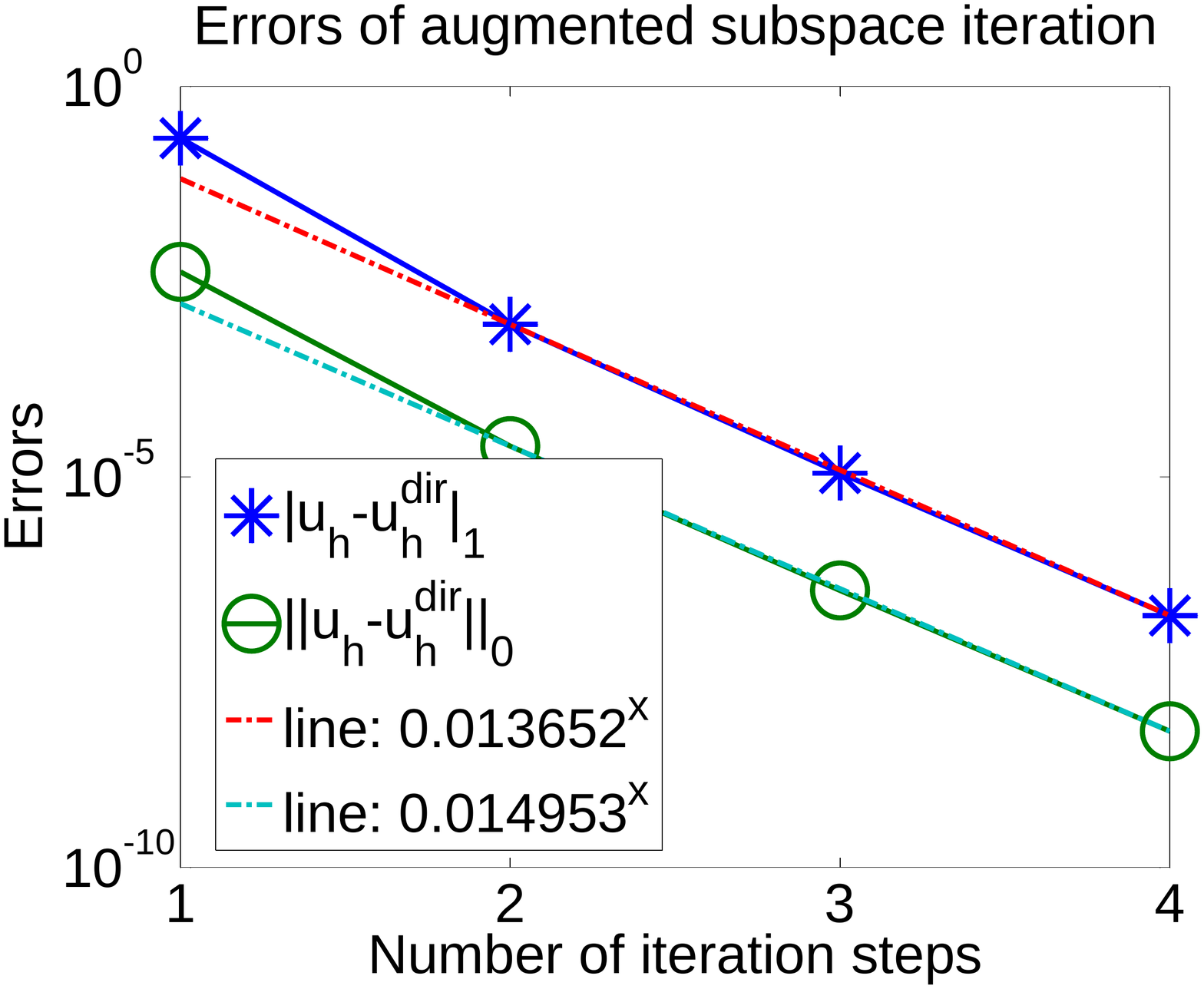}
\includegraphics[width=6cm,height=4.5cm]{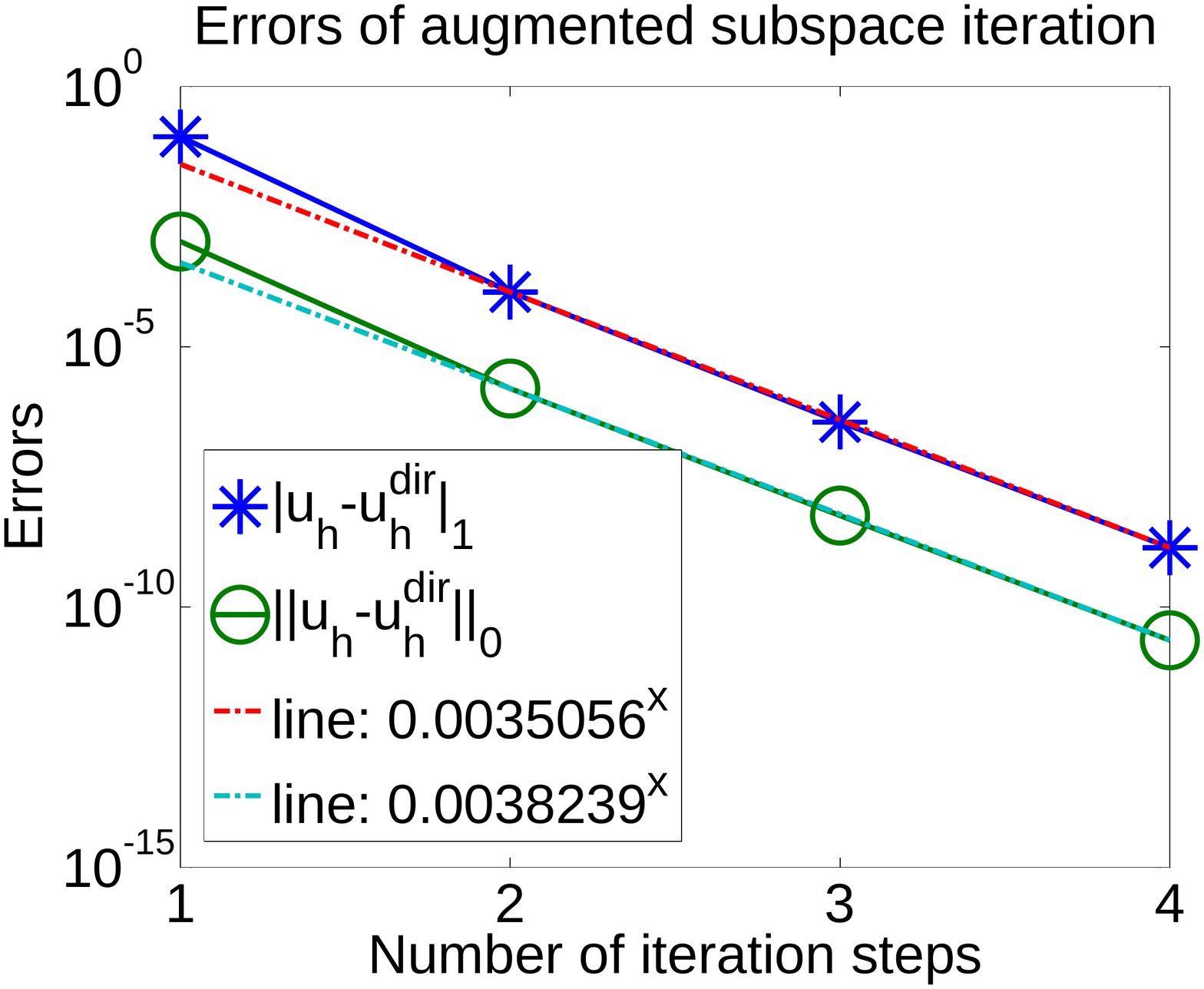}
\caption{The convergence behaviors for the first eigenfunction by Algorithm \ref{Algorithm_k}
corresponding to the coarse mesh size $H=\sqrt{2}/4$, $\sqrt{2}/8$, $\sqrt{2}/16$ and $\sqrt{2}/32$.
The corresponding convergence rates are $0.13142$, $0.048523$, $0.013652$ and  $0.0035056$.}\label{Result_Coarse_Mesh}
\end{figure}

Then, we check the performance of Algorithm \ref{Algorithm_k} for computing the smallest $4$ eigenpairs.
Figure \ref{Result_Coarse_Mesh_4} shows the corresponding convergence behaviors  for the smallest $4$ eigenfunctions
by Algorithm \ref{Algorithm_k} with the coarse space being the linear finite element space on the mesh with size $H=\sqrt{2}/8$, $\sqrt{2}/16$, $\sqrt{2}/32$ and $\sqrt{2}/64$.
We can find that the corresponding convergence rate are $0.31838$, $0.09979$, $0.026024$ and $0.0068251$.
Furthermore, from Figures \ref{Result_Coarse_Mesh} and \ref{Result_Coarse_Mesh_4}, we can find
the convergence rate for the $4$-th eigenfucntion is slower than that for the $1$-st eigenfunction which
is consistent with Theorem \ref{Algorithm_k}.
\begin{figure}[http!]
\centering
\includegraphics[width=6cm,height=4.5cm]{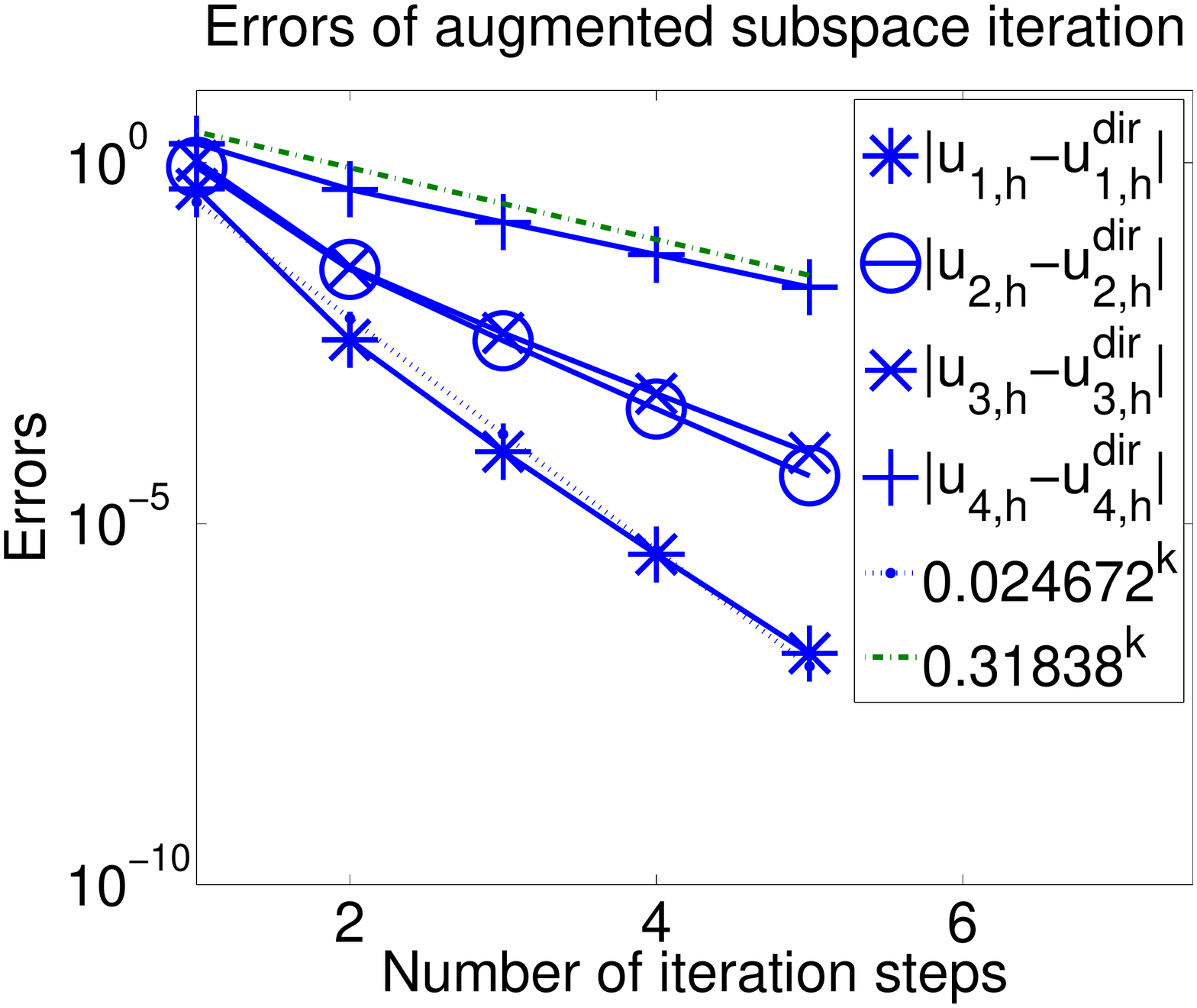}
\includegraphics[width=6cm,height=4.5cm]{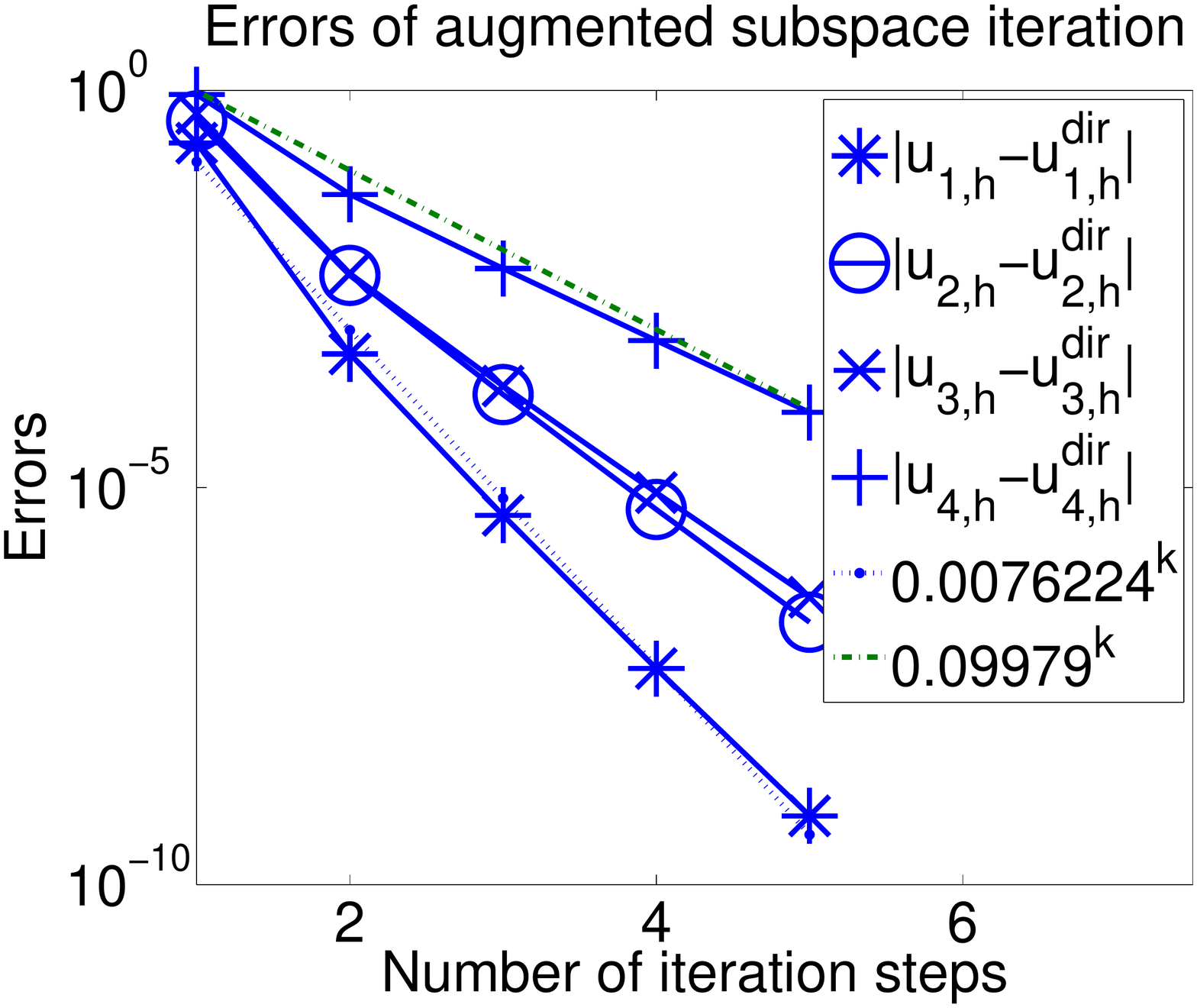}\\
\includegraphics[width=6cm,height=4.5cm]{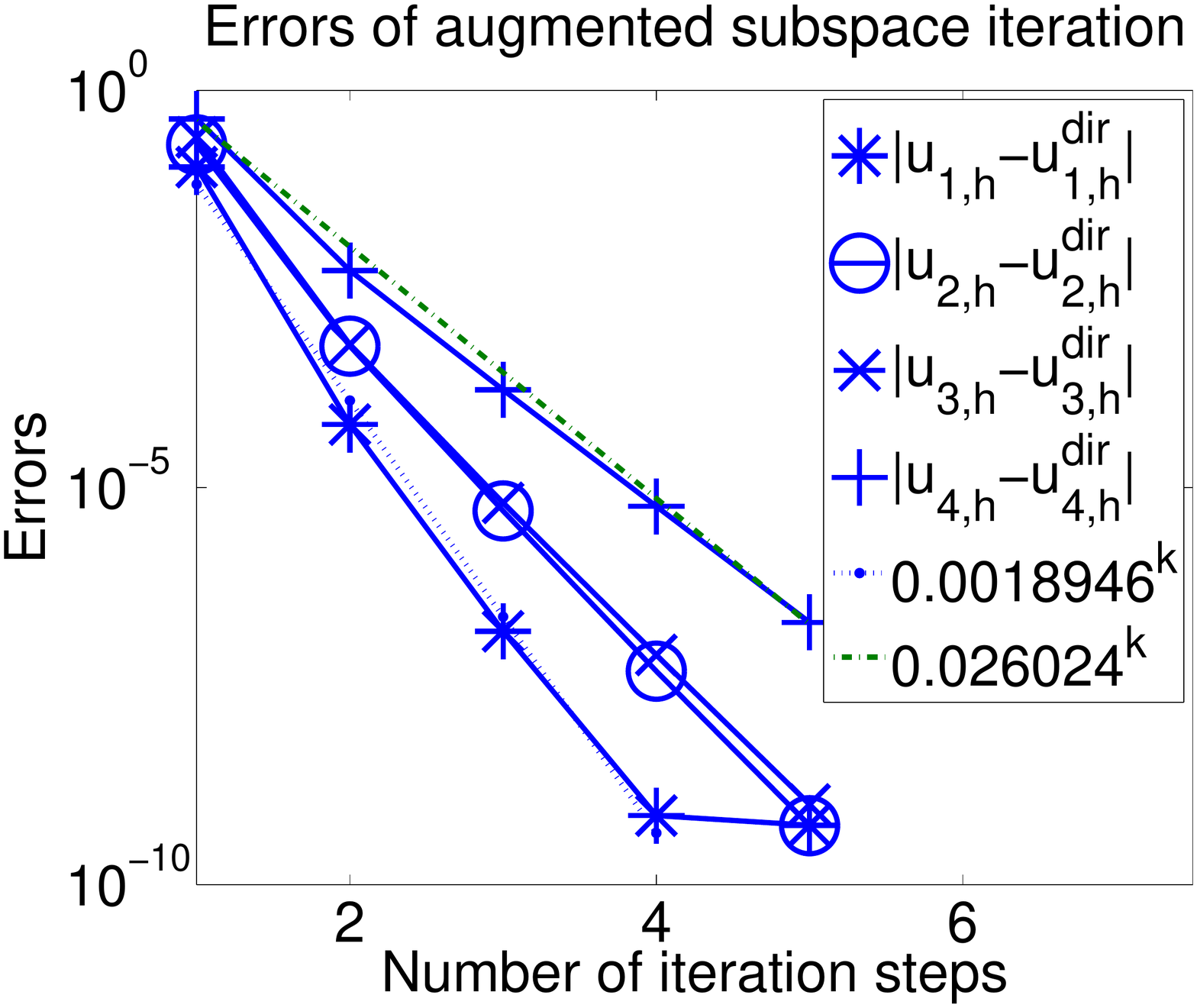}
\includegraphics[width=6cm,height=4.5cm]{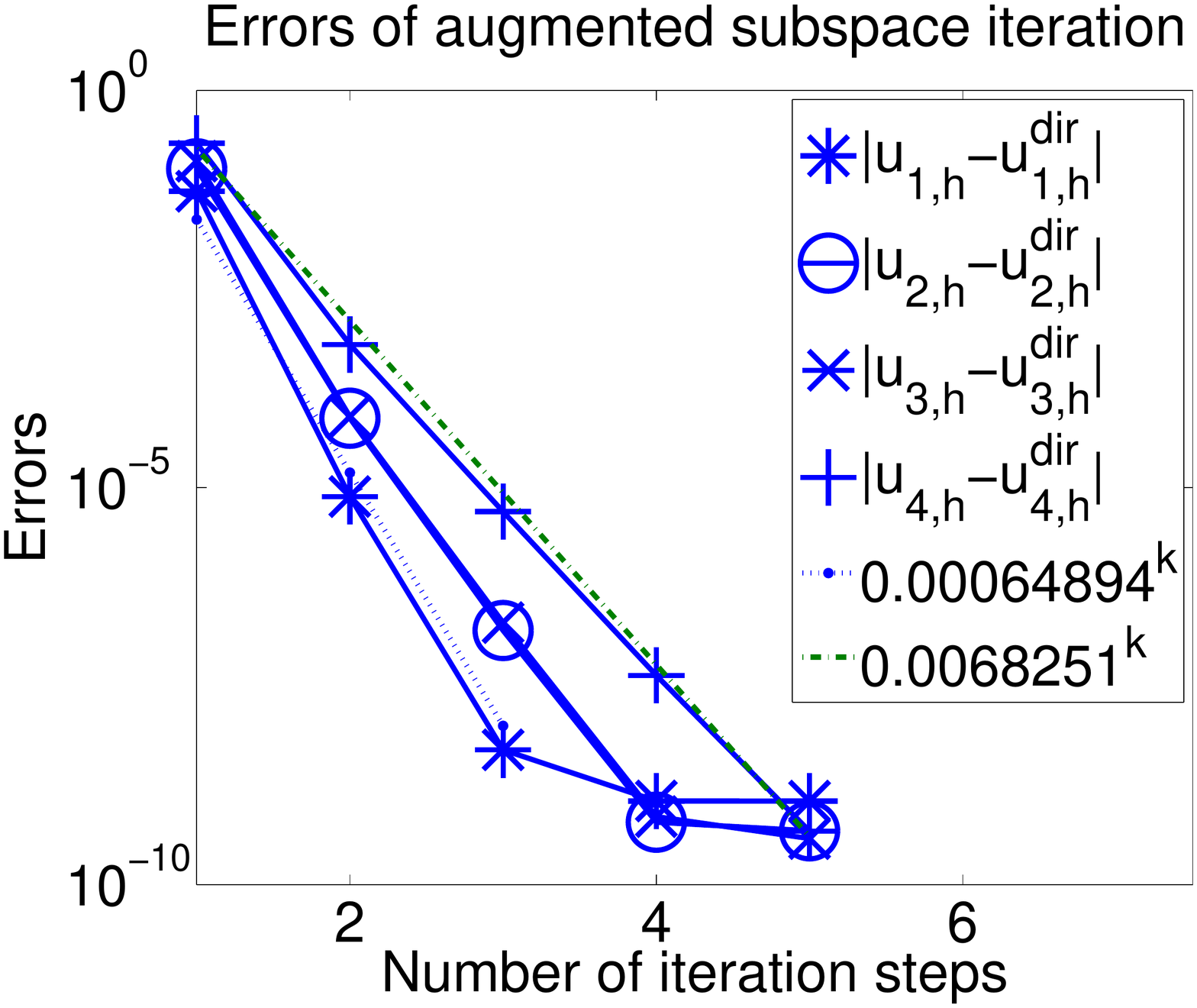}
\caption{The convergence behaviors for the smallest $4$ eigenfunction by Algorithm \ref{Algorithm_k}
with the coarse space being the linear finite element space on the mesh with size $H=\sqrt{2}/8$, $\sqrt{2}/16$, $\sqrt{2}/32$ and $\sqrt{2}/64$.
The corresponding convergence rates are $0.31838$, $0.09979$, $0.026024$ and $0.0068251$.}\label{Result_Coarse_Mesh_4}
\end{figure}

The final task is to check the performance of Algorithm \ref{Algorithm_1} for computing the only $4$-th eigenpair.
Figure \ref{Result_Coarse_Mesh_4_Only} shows the corresponding convergence behaviors  for the only $4$-th eigenfunctions
by Algorithm \ref{Algorithm_1} with the coarse space being the linear finite element space on
the mesh with size $H=\sqrt{2}/8$, $\sqrt{2}/16$, $\sqrt{2}/32$ and $\sqrt{2}/64$. The corresponding convergence rate shown in
Figure \ref{Result_Coarse_Mesh_4_Only} are $0.33687$, $0.11207$, $0.030571$ and $0.0077354$.
These results show that the augmented subspace method defined by
Algorithm \ref{Algorithm_1} has second order convergence which validates the results (\ref{Test_2_1})-(\ref{Test_2_0}).
\begin{figure}[http!]
\centering
\includegraphics[width=6cm,height=4.5cm]{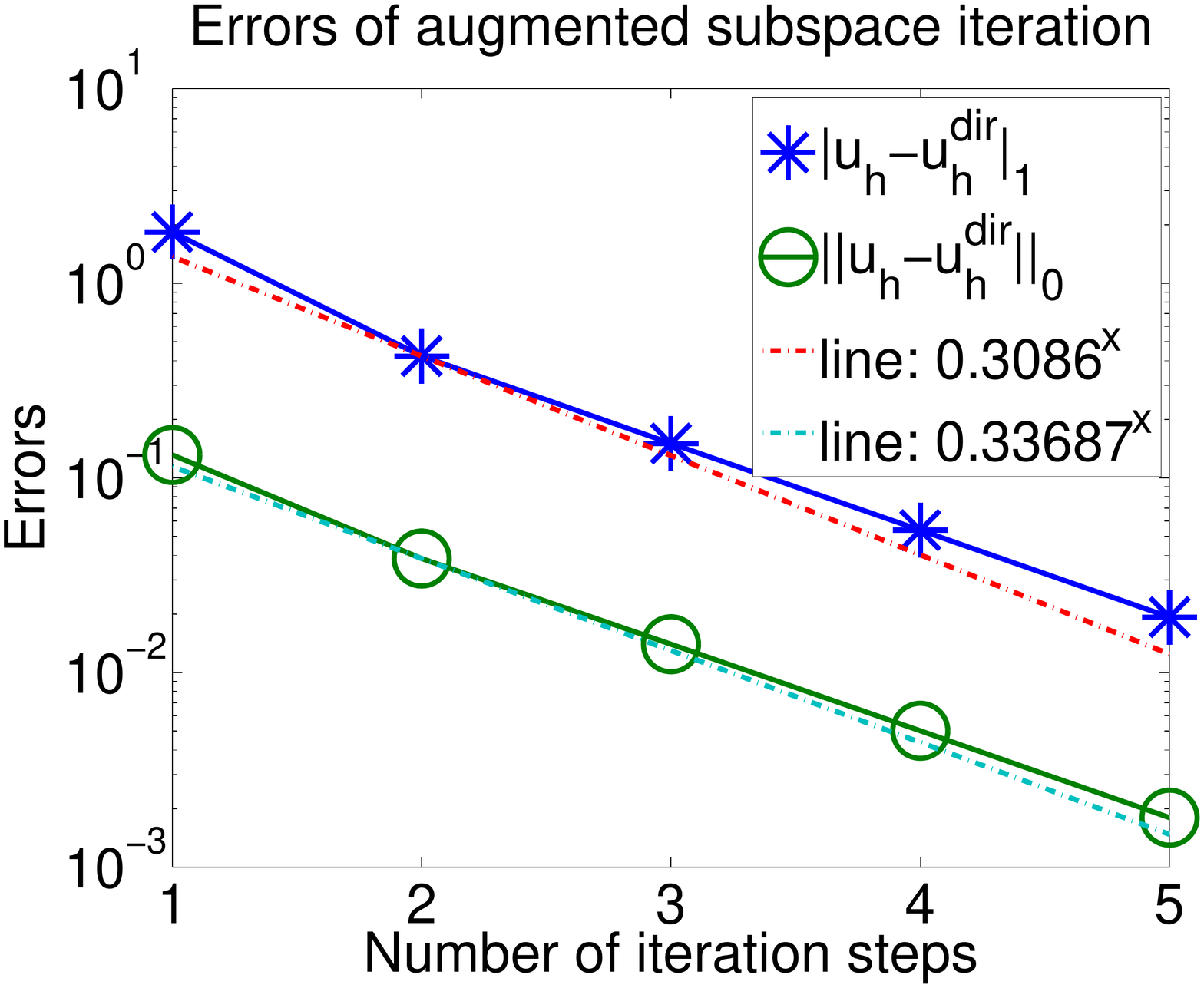}
\includegraphics[width=6cm,height=4.5cm]{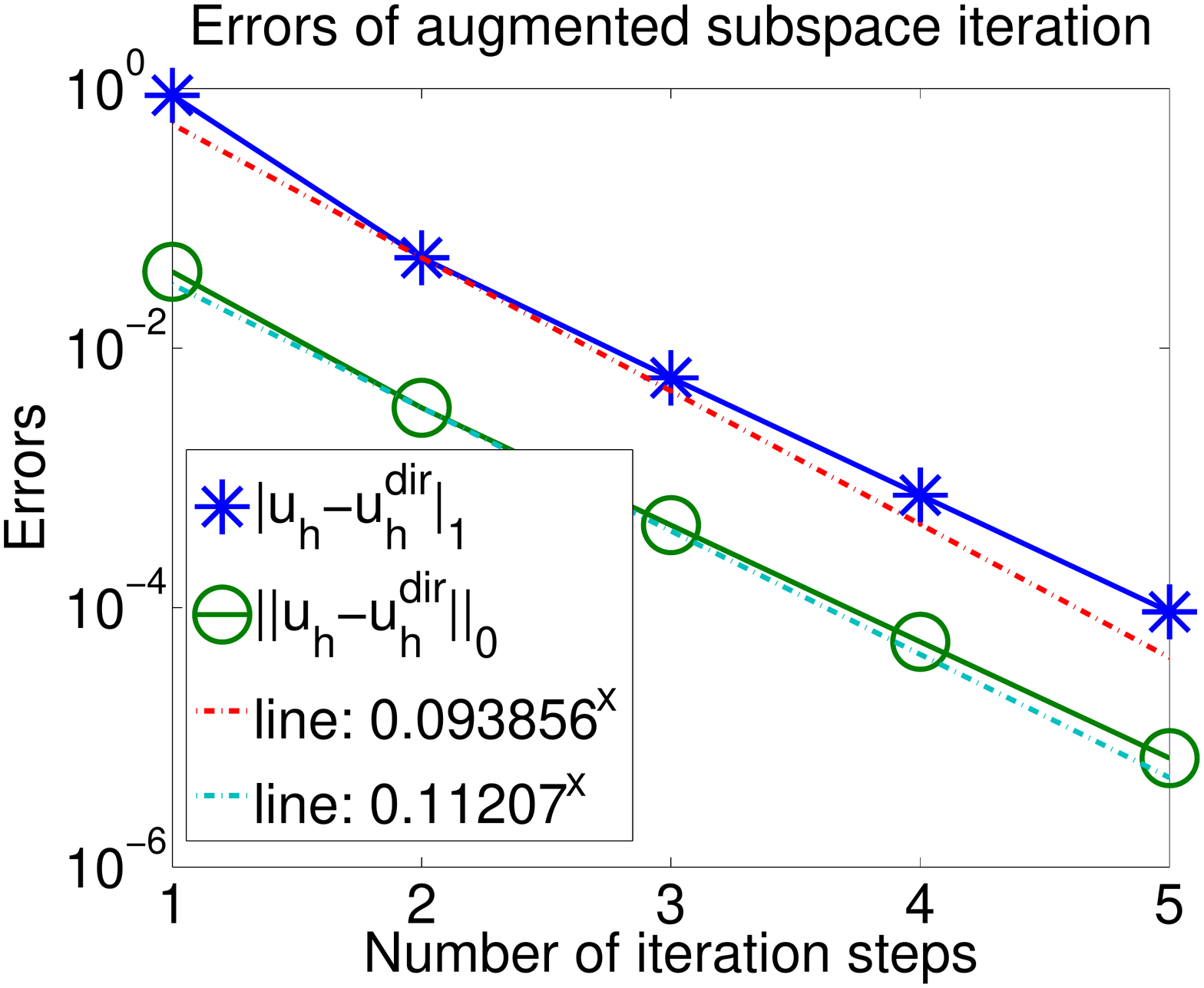}\\
\includegraphics[width=6cm,height=4.5cm]{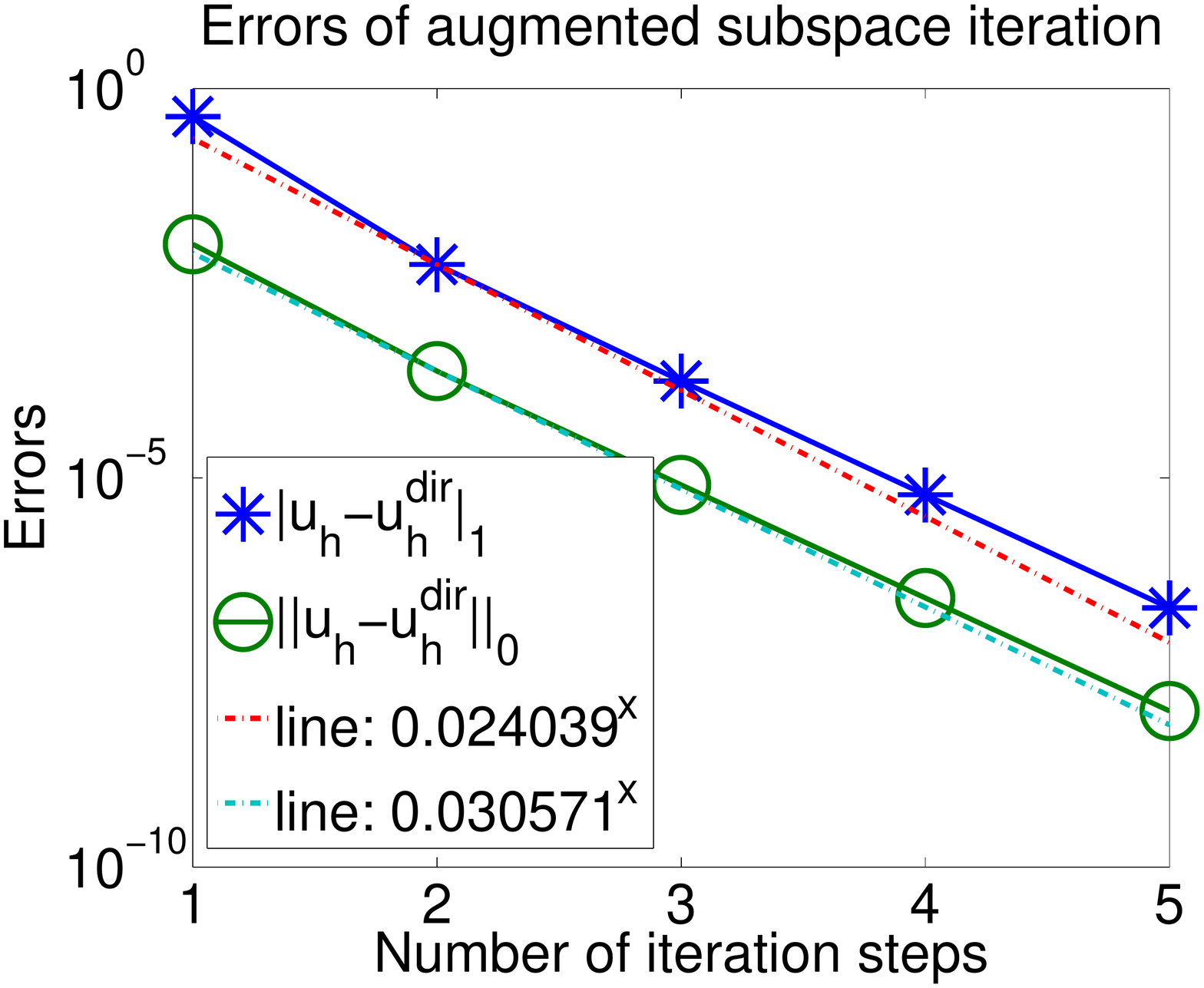}
\includegraphics[width=6cm,height=4.5cm]{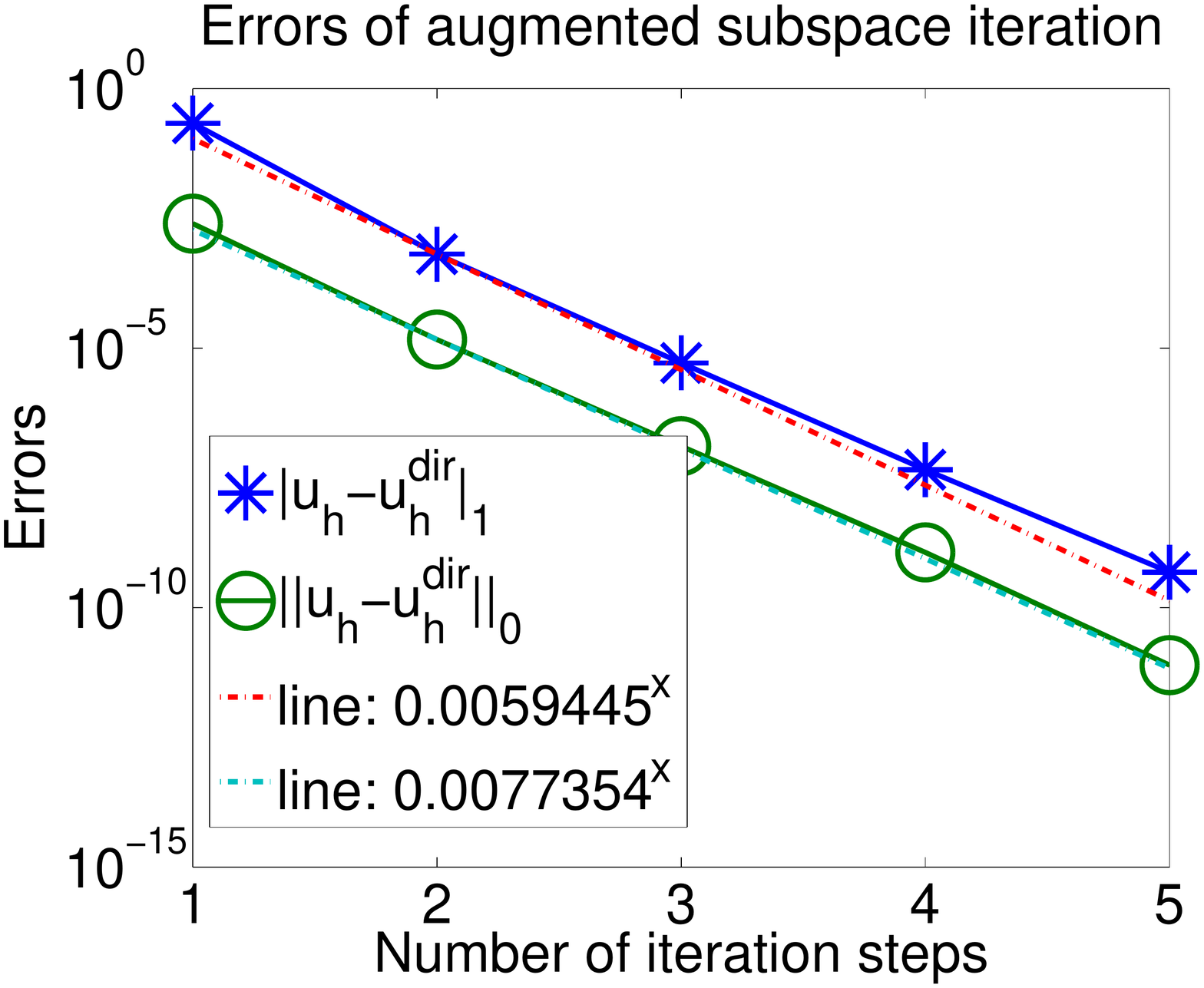}
\caption{The convergence behaviors for the only $4$-th eigenfunction by Algorithm \ref{Algorithm_1}
with the coarse space being the linear finite element space on the mesh with size $H=\sqrt{2}/8$, $\sqrt{2}/16$, $\sqrt{2}/32$ and $\sqrt{2}/64$.
The corresponding convergence rates are $0.33687$, $0.11207$, $0.030571$ and $0.0077354$.}\label{Result_Coarse_Mesh_4_Only}
\end{figure}

\section{Concluding remarks}
In this paper, some enhanced error estimates for the augmented subspace method are deduced for
solving eigenvalue problems. We have derived higher order convergence rates than
existing results. Based on these new results, we can also
produce the corresponding sharper error estimates for the multigrid and multilevel methods
which are designed based on the augmented subspace methods and the sequence of grids.

\end{document}